\documentclass[a4paper,11pt]{amsart}

\usepackage{graphicx, rotating}
\usepackage{subfigure}

\usepackage[utf8]{inputenc}
\usepackage[toc]{appendix}
\usepackage{twoopt}

\usepackage{amssymb}
\usepackage{amsthm}
\usepackage{amsmath,a4wide}
\numberwithin{equation}{section}
\usepackage{hyperref}
\usepackage{mathtools}
\usepackage{amsfonts}
\usepackage{amsaddr}

\usepackage{marvosym}
\usepackage{verbatim}
\usepackage{upgreek}

\usepackage{enumerate}
\usepackage{pdfsync}
\usepackage{caption}
\usepackage{hyperref}
\usepackage{enumerate}
\mathtoolsset{showonlyrefs}

\theoremstyle{plain}
\newtheorem{theorem}{Theorem}[section]
\theoremstyle{plain}
\newtheorem{corollary}[theorem]{Corollary}
\theoremstyle{plain}
\newtheorem{lemma}[theorem]{Lemma}

\theoremstyle{plain}
\newtheorem{proposition}[theorem]{Proposition}
\theoremstyle{definition}

\theoremstyle{remark}

\theoremstyle{remark}

\theoremstyle{definition}

\theoremstyle{plain}
\newtheorem{conjecture}[theorem]{Conjecture}
\theoremstyle{definition}


\newcommand{\R}{\mathbb{R}}
\newcommand{\C}{\mathbb{C}}

\newcommand{\Z}{\mathbb{Z}}
\newcommand{\N}{\mathbb{N}}
\newcommand{\T}{\mathbb{T}}

\renewcommand{\l}{\lambda}
\renewcommand{\L}{\Lambda}

\newcommand{\D}{\mathbb{D}}
\newcommand{\LC}{\mathcal{L}}

\newcommandtwoopt{\xarrow}[2][0.5cm][0]{\mathrel{\rotatebox[origin=c]{#2}{$\xrightarrow{\rule{#1}{0pt}}$}}}

\begin{document}

\title[Hypergeometric Functions, Heat Kernels and a ``Weltkonstante"]{
		An Application of Hypergeometric Functions to Heat Kernels on Rectangular and Hexagonal Tori and a ``Weltkonstante"\\- Or -\\How Ramanujan Split Temperatures
	}

\author[Markus Faulhuber]{Markus Faulhuber}
\address{NuHAG, Faculty of Mathematics, University of Vienna\\ Oskar-Morgenstern-Platz 1, 1090 Vienna, Austria}
\email{markus.faulhuber@univie.ac.at}
\thanks{The author was supported by an Erwin--Schrödinger Fellowship of the Austrian Science Fund (FWF): J4100-N32. The results have partially been established during the first stage of the fellowship, which the author spent with the Analysis Group at NTNU Trondheim, Norway. The author wishes to thank Martin Ehler and Franz Luef for beneficial feedback on the first draft of the manuscript. The author wishes to thank the anonymous referees for a careful reading of the manuscript and suggestions for improvement of the manuscript, in particular for the encouragement to expand the section on the hexagonal torus.}

\begin{abstract}
	In this work we investigate the heat kernel of the Laplace--Beltrami operator on a rectangular torus and the according temperature distribution. We compute the minimum and the maximum of the temperature on rectangular tori of fixed area by means of Gauss' hypergeometric function $_2F_1$ and the elliptic modulus. In order to be able to do this, we employ a beautiful result of Ramanujan, connecting hypergeometric functions, the elliptic modulus and theta functions. Also, we investigate the temperature distribution of the heat kernel on hexagonal tori and use Ramanujan's corresponding theory of signature 3 to derive analogous results to the rectangular case. Lastly, we show connections to the problem of finding the exact value of Landau's ``Weltkonstante", a universal constant arising in the theory of extremal holomorphic mappings; and for a related, restricted extremal problem we show that the conjectured solution is the second lemniscate constant.
\end{abstract}

\subjclass[2010]{33C05, 35K08, 51M16}
\keywords{Complete Elliptic Integrals, Elliptic Modulus, Heat Kernel, Hypergeometric Functions, Landau's ``Weltkonstante", Theta Functions}

\maketitle

\section{Introduction}\label{sec_Intro}
This article is inspired by a problem posed and investigated in the article of Baernstein, Eremenko, Fryntov and Solynin \cite{Baernstein_Metric_2005} and, again, in Eremenko's preprint \cite{Eremenko_Hyperbolic_2011}. The problem discussed in \cite{Baernstein_Metric_2005} and \cite{Eremenko_Hyperbolic_2011} is about finding the exact value of a constant, closely related to a universal constant arising from a problem in geometric function theory, posed by Landau \cite{Lan29}. As repeatedly suggested by Baernstein \cite{Baernstein_ExtremalProblems_1994}, \cite{Baernstein_HeatKernel_1997} and in the joint work of Baernstein and Vinson \cite{BaernsteinVinson_Local_1998}, we study the heat kernel on a torus, hoping to get a better insight to Landau's problem. However, we do not follow the suggestion to study heat kernels on tori with fixed covering radius, but rather tori of fixed surface area. Also, we solely focus on tori identifiable with rectangular lattices or a hexagonal lattice. This is due to the fact that only for these tori the location of the minimal temperature is time-independent. It is the high symmetry of the underlying rectangular and hexagonal lattices which forces the location of the minimal temperature to be stationary.

In \cite{Eremenko_Hyperbolic_2011}, Eremenko studies Landau's problem for rectangular lattices with lattice parameters $2 \omega$ and $2 \omega'$ and the restriction that the covering radius is fixed  to $4 \omega^2 + 4 \omega'^2 = 1$. This leads to the main idea in this work, which is to study the heat kernel as a function of the elliptic modulus $k$ and the complementary elliptic modulus $k'$ with $k^2+k'^2 = 1$. The results in this work may open a promising connection between Landau's problem and a minimum problem for the heat kernel on tori of fixed surface area.

We start with a theorem of Landau \cite{Lan29}, closely related to a theorem of Bloch \cite{Blo25}.
\begin{theorem}[Landau, 1929]\label{thm_Landau}
	Let $f: \D \to \C$ be holomorphic from the open unit disc $\D$ to the complex plane $\C$ with the property $|f'(0)| = 1$. Then, there exists an absolute constant $\LC > 0$ such that a disc $D_\LC$ of radius $\LC$ is contained in the image of $f(\D)$.
\end{theorem}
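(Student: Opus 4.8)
The plan is to prove the more quantitative fact that $f(\D)$ contains a disc whose radius is bounded below by a universal constant, and in fact to exhibit an explicit admissible value of $\LC$ (the sharp value being a separate, much harder question, which is the subject of this paper). By a scaling reduction I would first replace $f$ by $F_s(z) = f(sz)$ with $s < 1$, which is holomorphic on a neighbourhood of $\overline{\D}$ and satisfies $|F_s'(0)| = s$; establishing the covering property for such $F_s$ with a covering radius proportional to $s$ and then letting $s \to 1$ recovers the statement for $f$ on the open disc. Thus I may assume throughout that $f$ is holomorphic on $\overline{\D}$ with $|f'(0)| = 1$.

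The first genuine step is to locate a good centre. Consider the continuous function $h(z) = (1-|z|)\,|f'(z)|$ on $\overline{\D}$; it vanishes on $\partial\D$ and satisfies $h(0) = 1$, so it attains its maximum $M \ge 1$ at some interior point $z_0$. Setting $\rho_0 = \tfrac{1}{2}(1-|z_0|)$, a short estimate shows that on the disc $D(z_0,\rho_0)$ one has $1-|z| > \rho_0$, hence $|f'(z)| \le M/(1-|z|) < M/\rho_0$, while at the centre $|f'(z_0)| = M/(1-|z_0|) = M/(2\rho_0)$. After translating, $g(w) = f(z_0 + w) - f(z_0)$ is holomorphic on $D(0,\rho_0)$ with $g(0)=0$, $|g'(0)| = M/(2\rho_0)$, and $\sup|g'| \le M/\rho_0$, so that $g$ has a large derivative at $0$ relative to its global derivative bound.

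The heart of the argument, and the step I expect to be the main obstacle, is a covering lemma: a function $g$ with $g(0)=0$, $|g'(0)|=a$ and $|g'|\le M'$ on $D(0,\rho)$ has an image containing the disc $D(0,\,a^2\rho/(4M'))$. I would prove this by writing $g(w) - g'(0)w = \int_0^w (g'(\zeta) - g'(0))\,d\zeta$ and applying the Schwarz lemma to $g'(\zeta)-g'(0)$ (which is bounded by $2M'$ and vanishes at $0$) to obtain $|g'(\zeta)-g'(0)| \le 2M'|\zeta|/\rho$; integration then yields $|g(w)-g'(0)w| \le M'|w|^2/\rho$. Optimising the resulting lower bound $|g(w)| \ge a|w| - M'|w|^2/\rho$ over the radius $|w|$ gives $|g(w)| \ge a^2\rho/(4M')$ on a suitable circle, and Rouch\'e's theorem, comparing $g$ with $g-c$, then forces $g$ to assume every value $c$ in the corresponding disc.

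Combining the two steps closes the proof: with $a = M/(2\rho_0)$, $M' = M/\rho_0$ and $\rho = \rho_0$, the covering radius evaluates to $M/16 \ge 1/16$, so $\LC = 1/16$ is admissible and independent of $f$. The delicate points I would check carefully are the derivative bounds on $D(z_0,\rho_0)$, the admissibility condition $a \le 2M'$ needed for the optimisation, and the fact that the Rouch\'e comparison genuinely counts a solution (guaranteed because $g'(0)\ne 0$ produces a zero of $g$). The extremal value of $\LC$ lies far deeper than this existence argument and is precisely the constant the remainder of the paper sets out to investigate.
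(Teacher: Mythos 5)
The paper does not actually prove this statement: Theorem \ref{thm_Landau} is a classical result quoted with a citation to Landau's 1929 article, and the paper is concerned with the sharp value of the constant, not with its existence. Your argument is therefore necessarily a different (self-contained) route, and it is the standard elementary Bloch--Landau argument: maximize $(1-|z|)\,|f'(z)|$ to find a good centre $z_0$, control $g'(\zeta)-g'(0)$ by the Schwarz lemma, and extract a covered disc via Rouch\'e. The details check out: on $D(z_0,\rho_0)$ one indeed has $1-|z|>\rho_0$ and hence $|f'|<M/\rho_0$ while $|f'(z_0)|=M/(2\rho_0)$; in the covering lemma the optimal radius is $r^*=a\rho/(2M')$, which here equals $\rho_0/4<\rho_0$ (the admissibility condition $a\le 2M'$ holds trivially since $a=M'/2$); $g'(0)\neq 0$ guarantees $g$ has a zero to count, so Rouch\'e yields $D\bigl(0,\,a^2\rho/(4M')\bigr)\subseteq g(D(0,\rho))$, and the numbers combine to the covering radius $M/16\ge 1/16$ for $f$ holomorphic across $\overline{\D}$. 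The one overstatement is the final limit: applying this to $F_s(z)=f(sz)$ gives a disc of radius $s/16$ inside $f(\D)$ for every $s<1$, but letting $s\to 1$ does not automatically produce a disc of radius $1/16$, because the centres $f(z_0(s))$ may escape to infinity when $f$ is unbounded near $\partial\D$. Without a normal-families argument you should either fix some $s$ (e.g., $s=\tfrac12$, giving the admissible value $\LC=1/32$) or assert only that every $\LC<1/16$ is admissible; either version fully proves the theorem as stated, which asks merely for the existence of some absolute constant, and the sharp value $\LC_+$ is, as you say, a far deeper matter and the subject of the paper.
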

The theorem basically tells us that the unit disc cannot entirely collapse under a mapping with the above properties and that the image must have a diameter of at least $2\LC$.

Landau's problem is to find the exact value of the constant $\LC$ and a precise definition of the constant is as follows. By $\ell(f)$, we denote the radius of the largest disc found in $f(\D)$;
\begin{equation}
	\ell(f) = \sup \{r \in \R_+ \mid D_r \subset f(\D), \, f \textnormal{ as in Theorem \ref{thm_Landau}} \} .
\end{equation}
Landau's constant $\LC$ is then defined as
\begin{equation}
	\LC = \inf \{ \ell(f) \mid f \textnormal{ as in Theorem \ref{thm_Landau} } \},
\end{equation}
which we will also call Landau's ``Weltkonstante", due to Landau's original article \cite{Lan29}.

We note that the problem of finding $\LC$ is invariant under rotation and translation, as for
\begin{equation}
	\widetilde{f}(z) = c f(z) + b, \qquad b, c \in \C, \, |c| = 1,
\end{equation}
we have
\begin{equation}
	| \widetilde{f}'(0) | = |f'(0)|.
\end{equation}
Because of the translation invariance of the problem, often the additional assumption, which is not a restriction, $f(0) = 0$ is made.

We have the following estimates for $\LC$;
\begin{equation}\label{eq_LC}
	\frac{1}{2} + 2 \cdot 10^{-8} < \LC \leq \LC_+ = \frac{\Gamma\left( \tfrac{1}{3} \right) \Gamma\left( \tfrac{5}{6} \right)}{\Gamma\left( \tfrac{1}{6} \right)} \approx 0.543259 \ldots \, .
\end{equation}
The upper bound is conjectured to be sharp and was established by Rademacher in 1943 \cite{Rad43} by constructing a concrete example \footnote{As Rademacher remarked, the same bound was established, but not published, by Robinson already in 1937.}. His example is the universal covering map of the once-punctured (complex) hexagonal torus. Despite some serious effort put into finding the exact value of Landau's constant, see e.g.~the articles \cite{Baernstein_ExtremalProblems_1994}, \cite{Baernstein_HeatKernel_1997}, \cite{Baernstein_Metric_2005}, \cite{BaernsteinVinson_Local_1998}, \cite{Eremenko_Hyperbolic_2011}, the problem remains wide open. The non-strict lower bound $\frac{1}{2}$ was given by Ahlfors \cite{Ahl38} and follows from his theory on ultrahyperbolic metrics (see also \cite{Ahl73}). A seeming improvement that $\LC$ is strictly greater than $\frac{1}{2}$ was achieved by Pommerenke \cite{Pom70}, however, according to the article of Yamada \cite{Yam88} the proof contained a mistake. The best known lower bound was then improved to $\frac{1}{2} +10^{-335}$ by Yanagihara \cite{Yan95} and to $\frac{1}{2} + 2 \cdot 10^{-8}$ by Chen and Shiba \cite{CheShi04}.

The above list of authors, who have contributed to Landau's problem and related Bloch type problems, is of course far from complete and we refer to the references in the above articles.

As described in \cite{BaernsteinVinson_Local_1998}, one can actually focus on universal covering maps of $\C \backslash \Gamma$, where $\Gamma$ is a relatively separated point set in $\C$. Also, the problem can be reformulated by fixing the radius of the disc and asking for the maximal value of the (modulus of the) derivative at the origin. Special cases of relatively separated point sets are lattices, which are discrete, co-compact subgroups of $\C$. A lattice $\L$ can be identified with a two-dimensional torus $\C/\L$. Baernstein suggested to study the heat distribution on the torus to get a better understanding of Landau's problem \cite{Baernstein_ExtremalProblems_1994}, \cite{Baernstein_HeatKernel_1997}, \cite{BaernsteinVinson_Local_1998}. This is the main motivation for this article.

In this work we will solely focus on rectangular tori, i.e., the underlying lattice is rectangular, and hexagonal tori, i.e., the underlying lattice is a hexagonal lattice. We will give a precise description of the coldest and hottest temperature on a rectangular torus of fixed area. Although Landau's problem seems to ask for lattices of fixed covering radius (see e.g.~\cite{Baernstein_HeatKernel_1997} or \cite{Eremenko_Hyperbolic_2011}), we will look at tori of fixed area by means of the (complementary) elliptic modulus. Also, we will try to resolve the seemingly paradox situation of the fixed radius versus fixed area condition. We note that, implicitly, the extremal temperature problem for rectangular tori has been fully treated in the article by Faulhuber and Steinerberger \cite{FaulhuberSteinerberger_Theta_2017} in a manner similar to Montgomery's article \cite{Montgomery_Theta_1988}. The essence in \cite{FaulhuberSteinerberger_Theta_2017} and \cite{Montgomery_Theta_1988} is to find extremal configurations for the periodization of a Gaussian, which results in the study of lattice theta functions. Similar results have also been obtained in the context of lattice energy minimization \cite{Betermin_Cubic_2019}, \cite{BetKnu_Born_18}, \cite{BetKnu18}, \cite{BeterminPetrache_DimensionReduction_2017}. Further topics which are concerned with extremal geometries include the search for extremal determinants of Laplacians on Riemannian surfaces \cite{Osgood_Determinants_1988}, whether one can ``hear the shape of a drum" \cite{Kac66} or the minimization of the Epstein zeta function \cite{Cas59}, \cite{Dia64}, \cite{Enn64}, \cite{Ran53}.

The new aspect in this work is that we can uniquely describe the geometry of a rectangular torus by the ratio of its coldest and hottest point and that this ratio directly refers to the elliptic modulus of the complete elliptic integral of the first kind. By using a remarkable result of Ramanujan, it is possible to determine the coldest and hottest temperature on a rectangular torus by only knowing their ratio. The key behind this fact is, as mentioned, that the elliptic modulus already defines the geometry of the torus. The most stunning fact, however, is that we will show that, for the most natural parameters, the coldest point on the square torus has temperature
\begin{equation}
	G = \theta_4(e^{-\pi})^2 \approx 0.834627 \ldots,
\end{equation}
which is known as Gauss' constant and that the conjectured solution to Eremenko's problem \cite{Eremenko_Hyperbolic_2011} is exactly $2G$. In return, this means that the separable ``Weltkonstante" $\LC_\square$ related to Landau's problem has the conjectured value
\begin{equation}\label{eq_LG}
	\LC_\square = \frac{1}{2G} = \frac{\Gamma\left( \tfrac{1}{2} \right) \Gamma\left( \tfrac{3}{4} \right)}{\Gamma\left( \tfrac{1}{4} \right)} \approx 0.599070 \ldots ,
\end{equation}
which is also known as the second lemniscate constant \cite[Chap.~6]{Fin03}. To the author's knowledge, the connection \eqref{eq_LG} between the lemniscate constant and Landau's problem has not been mentioned before in the literature, even though the value $\LC_\square^{-1} \approx 1.669254 \ldots$ was numerically computed and conjectured to be the solution to the separable Landau problem in \cite{Eremenko_Hyperbolic_2011}. Also, we will show a similar connection of $\LC_+$ to the minimal temperature on the hexagonal torus.

\section{An Extremal Problem for the Heat Kernel on the Torus}\label{sec_hk_rectangular}
In this section we are going to study the temperature distribution on a (real) rectangular torus. This will be done by considering the minimal and the maximal value of the heat kernel associated to the Laplace-Beltrami operator on a torus. For an introduction to heat kernels on manifolds we refer to the textbook of Grigor'yan \cite{Gri_Heat_09}.

We consider the family of lattices
\begin{equation}
	\L_\alpha = \alpha^{-1} \Z \times \alpha \Z, \qquad \alpha \in \R_+,
\end{equation}
of area 1 and the family of resulting tori is given by
\begin{equation}
	\T^2_\alpha = \R^2 \slash \L_\alpha = \R^2 \slash \left(\alpha^{-1} \Z \times \alpha \Z \right).
\end{equation}
We denote the Laplace-Beltrami operator on $\T^2_\alpha$ by $\Delta_\alpha$, where we choose the sign of $\Delta_\alpha$ such that its eigenvalues are non-negative. The eigenfunctions of $\Delta_\alpha$ are the complex exponentials
\begin{equation}
	e^\alpha_{k,l}(x,y) = e^{2 \pi i \left( \alpha k x + \alpha^{-1} l y \right)}
\end{equation}
with $(x,y) \in \T^2_\alpha$ and $(k,l) \in \Z^2$, which means that $(\alpha k, \alpha^{-1} l)$ is an element of the dual lattice $\L_\alpha^\bot = \alpha \Z \times \alpha^{-1} \Z$ . The eigenvalues are given by
\begin{equation}
	\l^\alpha_{k,l} = 4 \pi^2 \left( \alpha^2 k^2 + \alpha^{-2} l^2 \right).
\end{equation}
The associated heat kernel can be written as
\begin{equation}
	p_\alpha((x_1,y_1),(x_2,y_2);t)	:= \sum_{(k,l) \in \Z^2} e^{-\l_{k,l}^\alpha \, t} \, e_{k,l}^\alpha(x_1,y_1) \, \overline{e_{k,l}^\alpha(x_2,y_2)}, \qquad (x_1,y_1), \, (x_2, y_2) \in \T_\alpha^2.
\end{equation}
After making everything explicit, introducing the variable
\begin{equation}
	(x,y) = \left(\alpha (x_1-x_2), \alpha^{-1} (y_1-y_2)\right)
\end{equation}
and scaling $t \mapsto \tfrac{t}{4 \pi}$, this yields
\begin{equation}
	p_\alpha(x,y;t) = \sum_{(k,l) \in \Z^2} e^{- \pi t (\alpha^2 k^2 + \alpha^{-2} l^2)} e^{2 \pi i (k x + l y)},
\end{equation}
which from now on is the heat kernel associated to the Laplace-Beltrami operator $\Delta_\alpha$ on $\T_\alpha^2$. This function is now, of course, periodic with respect to the integer lattice $\Z \times \Z$, and we are actually looking at the torus $\T_1^2$ with a different metric and scaled time (we will give more details in Section \ref{sec_tori}). Nonetheless, by abuse of notation we will still say that we are looking at the heat kernel $p_\alpha(x,y;t)$ associated to the torus $\T_\alpha^2$. We pose the following problem(s), similar in style to Landau's problem. First, we define the minimal and maximal temperature on the torus for a fixed time $t$, given by
\begin{equation}
	A(\alpha;t) = \min_{(x,y)} \, p_\alpha(x,y;t) \qquad \textnormal{ and } \qquad B(\alpha;t) = \max_{(x,y)} \, p_\alpha(x,y;t),
\end{equation}
respectively. We will give an argument for the following claim in the next section, but for the moment, we note that
\begin{equation}
 A(\alpha;t) = p_\alpha \left(\tfrac{1}{2}, \tfrac{1}{2}; t\right) \qquad \textnormal{ and } \qquad B(\alpha;t) = p_\alpha (0,0;t).
\end{equation}
Now, for any $t \in \R_+$ there exist absolute constants $A^*(t)$ and $B_*(t)$ such that
\begin{equation}
	A(\alpha;t) \leq A^*(t) \qquad \textnormal{ and } \qquad B(\alpha;t) \geq B_*(t), \qquad \forall \alpha \in \R_+.
\end{equation}
Moreover, we have
\begin{equation}
	A^*(t) = \sup_{\alpha \in \R_+} A(\alpha;t)  \qquad \textnormal{ and } \qquad B_*(t) = \inf_{\alpha \in \R_+} B(\alpha;t).
\end{equation}
It is most natural, and in fact correct, to assume that
\begin{equation}\label{eq_opt_bounds}
	A^*(t) = A(1,t) \qquad \textnormal{ and } \qquad B_*(t) = B(1,t).
\end{equation}
We will return to the problem and its solution later on, but we already mention that the solution follows from the results in \cite{FaulhuberSteinerberger_Theta_2017}. Also, the problem on finding $B_*$ is closely related to Montgomery's result on minimal theta functions \cite{Montgomery_Theta_1988} and, in fact, the solution follows from the results given in \cite{Montgomery_Theta_1988}. Indeed, by adding the assumption that the quadratic form in Montgomery's theorem is not allowed to have mixed terms, we end up with the result on $B_*(t)$. We note that an extension of the result on $A^*(t)$ to general lattices, analogous to Montgomery's theorem, is still open and that a positive solution, meaning that one can show that the extremizer is the hexagonal lattice, would solve a conjecture of Strohmer and Beaver on optimal lattice configurations for Gaussian Gabor frames \cite{StrBea03}, at least for even density of the lattice (see e.g.~\cite{Faulhuber_Hexagonal_2018}).

\section{Hypergeometric Functions and Theta Functions}\label{sec_special}
As a next step, we will change the topic and study properties of some special functions. The main references for this section are Ramanujan's Notebooks by Berndt, in particular \cite[Chap.~17]{RamanujanIII} and the textbook of Whittaker and Watson \cite[Chap.~21]{WhiWat69}.

For a complex number $z$ and a non-negative integer $k$, we denote the rising Pochhammer symbol by
\begin{equation}
	(z)_k = \frac{\Gamma(z+k)}{\Gamma(z)},
\end{equation}
where $\Gamma(z)$ is Euler's gamma function
\begin{equation}
	\Gamma(z) = \int_{\R_+} t^{z-1} e^{-t} \, dt, \qquad \text{for } Re(z) > 0.
\end{equation}
It extends to a meromorphic function, with poles at the negative integers and 0. The hypergeometric series is then, formally, given by
\begin{equation}
	_mF_n(\alpha_1, \dots, \alpha_m; \, \beta_1, \dots, \beta_n; \, z) = \sum_{k=0}^\infty \frac{(\alpha_1)_k \dots (\alpha_m)_k}{(\beta_1)_k \dots (\beta_n)_k} \frac{z^k}{k!}, \qquad \alpha_1, \ldots, \alpha_m, \beta_1, \ldots, \beta_n,z \in \C
\end{equation}
where $m$ and $n$ are non-negative integers. In this work, we will only consider the case of Gauss' hypergeometric function with real parameters and real variable;
\begin{equation}
	_2 F_1(a,b;c;x) = \sum_{k=0}^\infty \frac{(a)_k (b)_k}{(c)_k} \frac{x^k}{k!},
\end{equation}
and the parameters will usually fulfill $a+b \leq c$ and $x \in (0,1)$, which means that we do not run into convergence issues. A result which we will employ later on is the following formula, due to Gauss (see e.g.~\cite[p.~89, (1.4)]{RamanujanIII}).
\begin{equation}\label{eq_Gauss}
	_2F_1 \left(x,y; \, \tfrac{1}{2}(x+y+1); \, \tfrac{1}{2} \right)
	= \frac{\sqrt{\pi} \, \Gamma \left( \tfrac{1}{2} x + \tfrac{1}{2} y + \tfrac{1}{2} \right)}
		{\Gamma\left( \tfrac{1}{2} x + \tfrac{1}{2} \right) \Gamma\left( \tfrac{1}{2} y + \tfrac{1}{2} \right)}.
\end{equation}

The next family of functions we introduce are Jacobi's theta function, where we will be especially interested in the so-called theta-nulls. We define the theta functions according to the textbook of Whittaker and Watson \cite[Chap.~21]{WhiWat69}. For $z \in \C$ and $q \in \C$ with $|q| < 1$, we define
\begin{align}
	\vartheta_1(z,q) & = \sum_{k \in \Z} (-1)^{(k-1/2)} q^{(k+1/2)^2} e^{(2k+1)\pi i z},
	& &
	\vartheta_2(z,q) = \sum_{k \in \Z} q^{(k+1/2)^2} e^{(2k+1)\pi i z},\\
	\vartheta_3(z,q) & = \sum_{k \in \Z} q^{k^2} e^{2 k \pi i z},
	& &
	\vartheta_4(z,q) = \sum_{k \in \Z} (-1)^k q^{k^2} e^{2 k \pi i z}.
\end{align}
We note that any of the above functions is real-valued for $q \in (0,1)$ as we have Fourier series with real coefficients whose values possess a symmetry in the power $k$. It is also common to write Jacobi's theta functions as functions of the pair of variables $(z,\tau) \in \C \times \mathbb{H}$, where $\mathbb{H}$ is the upper half plane;
\begin{equation}
	\mathbb{H} = \{ z \in \C \mid Im (z) > 0 \}.
\end{equation}
The nome $q$ is then replaced by $e^{\pi i \tau}$ and the fact that $\tau \in \mathbb{H}$ ensures that the series converge. We note that any of the above theta functions is expressible by any other theta function by an appropriate adjustment of the arguments. Also, any of the above theta functions has a product representation, the Jacobi triple product representation for which we refer to the textbook of Whittaker and Watson \cite[Chap.~21]{WhiWat69} or the textbook of Stein and Shakarchi \cite[Chap.~10]{SteSha_Complex_03}. We will only state the product representation for $\vartheta_3(z,q)$. The other product representations can be obtained from this one and, also, we will only need this certain product representation in the sequel;
\begin{equation}
	\vartheta_3(z,q) = \prod_{k \geq 1} \left( 1 - q^{2k} \right) \left( 1 + q^{2k-1} e^{2 \pi i z} \right) \left( 1 + q^{2k-1} e^{-2 \pi i z} \right).
\end{equation}
By expanding
\begin{equation}
	\left( 1 + q^{2k-1} e^{2 \pi i z} \right) \left( 1 + q^{2k-1} e^{-2 \pi i z} \right) = 1 + 2 q^{2k-1} \cos(2 \pi z) + q^{4k-2},
\end{equation}
it readily follows that for any $q \in (0,1)$ we have
\begin{equation}\label{eq_minimum}
	\vartheta_3 \left( \tfrac{1}{2} + l, q \right) \leq \vartheta_3(z,q)
\end{equation}
for any $l \in \Z$, $z \in \R$.

The theta-nulls are functions depending only on $q$ (or $\tau$) and are derived by setting $z = 0$. We note that $\vartheta_1(0,q) = 0$ for all $|q|<1$, as it is an odd function of $z$. The other 3 theta functions are even with respect to $z$ and the theta-nulls are given as follows;
\begin{align}
	\theta_2(q) & = \vartheta_2(0,q) = \sum_{k \in \Z} q^{\left(k + \tfrac{1}{2} \right)^2},\\
	\theta_3(q) & = \vartheta_3(0,q) = \sum_{k \in \Z} q^{k^2},\\
	\theta_4(q) & = \vartheta_4(0,q) = \sum_{k \in \Z} (-1)^k q^{k^2}.
\end{align}
The above functions also obey the following rules (see e.g.~\cite[Chap.~4]{ConSlo99} or \cite[Chap.~21]{WhiWat69}), which can be established by using the Poisson summation formula;
\begin{equation}\label{eq_Poisson3}
	\theta_3 \left(e^{\pi i \tau} \right) = \sqrt{\tfrac{i}{\tau}} \, \theta_3 \left(e^{-\pi \tfrac{i}{\tau}} \right),
\end{equation}
\begin{equation}\label{eq_Poisson24}
	\theta_2 \left(e^{\pi i \tau} \right) = \sqrt{\tfrac{i}{\tau}} \, \theta_4 \left(e^{-\pi \tfrac{i}{\tau}} \right)
	\qquad  \textnormal{ and } \qquad
	\theta_4 \left(e^{\pi i \tau} \right) = \sqrt{\tfrac{i}{\tau}} \, \theta_2 \left(e^{-\pi \tfrac{i}{\tau}} \right).
\end{equation}

There exist most beautiful connections between the theta-nulls of Jacobi's theta functions and Gauss hypergeometric functions, the first of which we find in Berndt's Part III of Ramanujan's Notebooks \cite[Chap.~17, Entry 6]{RamanujanIII}.
\begin{equation}\label{eq_magic}
	{}_2F_1 \left(\tfrac{1}{2}, \tfrac{1}{2}; 1; k^2\right) = \theta_3(q)^2,
\end{equation}
where the quantity $k$ is called the elliptic modulus or eccentricity. It is defined as
\begin{equation}
	k = \frac{\theta_2(q)^2}{\theta_3(q)^2}
\end{equation}
and, hence, depends implicitly on $q = e^{\pi i \tau}$, $\tau \in \mathbb{H}$. In the sequel, we will also encounter the quantity $k'$, called the complementary elliptic modulus \footnote{Here and in the rest of the work, the expression $k'$ should not be mistaken for the derivative of $k$.};
\begin{equation}
	k^2 + k'^2 = 1.
\end{equation}
The name elliptic modulus actually comes from its appearance in the complete elliptic integrals of the first kind. The connection is as follows (see e.g.~\cite[p.~4, (I6)]{RamanujanIII});
\begin{equation}
	K(k) = \int_0^{\pi/2} \frac{d\varphi}{\sqrt{1-k^2 \sin(\varphi)^2}} = \frac{\pi}{2} \; {}_2F_1\left( \tfrac{1}{2}, \tfrac{1}{2}; 1; k^2 \right).
\end{equation}
The reader interested in this and other relations may also consult \cite{RamanujanIII}, \cite{Cox_Mean_1984} or \cite[Chap.~21-22]{WhiWat69}.

It is well known (see e.g.~\cite[Chap.~4]{ConSlo99}) that
\begin{equation}\label{eq_4}
	\theta_2(q)^4 + \theta_4(q)^4 = \theta_3(q)^4 .
\end{equation}
Hence, it follows that
\begin{equation}
	k' = \frac{\theta_4(q)^2}{\theta_3(q)^2}.
\end{equation}
We note that elliptic integrals are sometimes also parametrized by the so-called parameter $m$ instead of the modulus $k$ (see e.g.~\cite[Chap.~16-17]{AbrSte72}). The connection is rather simple;
\begin{equation}
	m = k^2 = \frac{\theta_2(q)^4}{\theta_3(q)^4}.
\end{equation}
The complementary parameter $m'$ is connected to the parameter by
\begin{equation}
	m + m' = 1.
\end{equation}
Consequently, we also get
\begin{equation}
	m' = k'^2 = \frac{\theta_4(q)^4}{\theta_3(q)^4}.
\end{equation}
We will state our results in terms of the modulus and complementary modulus $k$ and $k'$. However, the results can easily be formulated in terms of the parameter and the complementary parameter as well.

In the sequel, it will be convenient to introduce the following real-valued theta functions of one non-negative (real) argument;
\begin{equation}
	\widetilde{\theta}_j(t) := \theta_j(e^{-\pi t}), \qquad j = 2,3,4.
\end{equation}
The above functions are hence the theta-nulls of Jacobi's theta functions restricted to $q = e^{-\pi t}$, or $\tau = i t$ with $t \in \R_+$. We note the following property.
\begin{proposition}\label{pro_k}
	The map
	\begin{align}
		\widetilde{k}: \R_+ & \to (0,1)\\
		t & \mapsto \widetilde{k}(t) = \frac{\widetilde{\theta}_2(t)^2}{\widetilde{\theta}_3(t)^2}
	\end{align}
	is bijective and strictly decreasing for $t$ increasing.
\end{proposition}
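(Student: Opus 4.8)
The plan is to reduce the statement to the classical behaviour of the ratio of complementary complete elliptic integrals. The crucial first step is to establish the identity
\[
 t = \frac{K(k')}{K(k)}, \qquad k = \widetilde{k}(t), \quad k' = \sqrt{1-k^2}.
\]
Writing $q = e^{-\pi t}$ and combining \eqref{eq_magic} with the integral representation of $K$ gives $K(\widetilde{k}(t)) = \tfrac{\pi}{2}\,\widetilde{\theta}_3(t)^2$ directly. To handle $K(k')$ I would first apply the modular transformations \eqref{eq_Poisson3} and \eqref{eq_Poisson24} with $\tau = i/t$ to obtain $\widetilde{\theta}_2(1/t) = \sqrt{t}\,\widetilde{\theta}_4(t)$ and $\widetilde{\theta}_3(1/t) = \sqrt{t}\,\widetilde{\theta}_3(t)$. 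These identify the complementary modulus at $t$ with the modulus at $1/t$, namely
\[
 k' = \frac{\widetilde{\theta}_4(t)^2}{\widetilde{\theta}_3(t)^2} = \frac{\widetilde{\theta}_2(1/t)^2}{\widetilde{\theta}_3(1/t)^2} = \widetilde{k}(1/t).
\]
Applying \eqref{eq_magic} at parameter $1/t$ then yields $K(k') = \tfrac{\pi}{2}\,\widetilde{\theta}_3(1/t)^2 = \tfrac{\pi}{2}\,t\,\widetilde{\theta}_3(t)^2 = t\,K(k)$, which is exactly the claimed identity.

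Next I would study the auxiliary function $\Phi(k) := K(\sqrt{1-k^2})/K(k)$ on $(0,1)$. From the integral representation, for each $\varphi$ with $\sin(\varphi) \neq 0$ the integrand $(1-k^2\sin(\varphi)^2)^{-1/2}$ is strictly increasing in $k$, so $K$ is strictly increasing and positive on $(0,1)$; since $k\mapsto \sqrt{1-k^2}$ is strictly decreasing, the numerator $K(\sqrt{1-k^2})$ is strictly decreasing and the denominator $K(k)$ strictly increasing, whence $\Phi$ is strictly decreasing. For the range I would invoke the boundary behaviour of $K$: as $k\to 0^+$ one has $K(k)\to K(0)=\tfrac{\pi}{2}$ while $\sqrt{1-k^2}\to 1^-$ and $K(\sqrt{1-k^2})\to+\infty$, so $\Phi(k)\to+\infty$; as $k\to 1^-$ the roles reverse and $\Phi(k)\to 0^+$. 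Thus $\Phi$ is a continuous, strictly decreasing bijection from $(0,1)$ onto $\R_+$.

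Finally, the identity of the first step reads $\Phi(\widetilde{k}(t))=t$ for all $t\in\R_+$. Applying $\Phi^{-1}$ shows $\widetilde{k}=\Phi^{-1}$ on $\R_+$, so $\widetilde{k}$ is the inverse of a strictly decreasing bijection and is therefore itself a strictly decreasing bijection from $\R_+$ onto $(0,1)$, which is the assertion. I expect the main obstacle to lie in the first step: the careful bookkeeping of the nome and the modular transformations needed to recognise the complementary modulus $k'$ at parameter $t$ as $\widetilde{k}(1/t)$, and hence to express $K(k')$ through $\widetilde{\theta}_3(1/t)$. The divergence $K(k)\to+\infty$ as $k\to 1^-$, used for surjectivity, is classical but worth recording explicitly.
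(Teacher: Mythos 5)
Your proof is correct, but it takes a genuinely different route from the paper's. The paper differentiates $\widetilde{k}$ directly and reduces strict monotonicity to the inequality $\phi_2(t) < \phi_3(t)$ for $\phi_j(t) = t\,\widetilde{\theta}_j'(t)/\widetilde{\theta}_j(t)$, importing from \cite{Faulhuber_Determinants_2018} the facts that $\phi_2$ is strictly decreasing, $\phi_3$ is strictly increasing, and both tend to $-\tfrac{1}{2}$ as $t \to 0$; bijectivity then follows from the limits of $\widetilde{\theta}_2/\widetilde{\theta}_3$ at $0$ and $\infty$. You instead establish $t = K(k')/K(k)$ --- the classical inversion of the nome, $q = e^{-\pi K(k')/K(k)}$ --- and your derivation of it from \eqref{eq_magic} together with the transformations \eqref{eq_Poisson3} and \eqref{eq_Poisson24} is correct (the bookkeeping $\widetilde{\theta}_2(1/t) = \sqrt{t}\,\widetilde{\theta}_4(t)$, $\widetilde{\theta}_3(1/t) = \sqrt{t}\,\widetilde{\theta}_3(t)$, hence $k' = \widetilde{k}(1/t)$, checks out). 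Since the properties of $K$ you use (strict monotonicity via the integrand, $K(0) = \tfrac{\pi}{2}$, divergence as $k \to 1^-$) are elementary and independent of the proposition, there is no circularity, and $\widetilde{k} = \Phi^{-1}$ with $\Phi(k) = K(\sqrt{1-k^2})/K(k)$ delivers both monotonicity and bijectivity in one stroke. What each approach buys: yours is self-contained given the formulas already displayed in the paper and yields as a bonus the explicit inverse $\widetilde{k}^{-1}(k) = K(k')/K(k)$; the paper's stays entirely within theta-function calculus and avoids the modulus--nome inversion, at the cost of invoking nontrivial monotonicity results from the author's earlier work. Two small points worth recording explicitly in your write-up: that $\widetilde{k}(t) \in (0,1)$ (so that $\Phi(\widetilde{k}(t))$ is defined and $k' > 0$), which follows from the positivity of $\widetilde{\theta}_4$ visible in the triple product, and the divergence $K(k) \to \infty$ as $k \to 1^-$, which you yourself flag.
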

\begin{proof}
	We will first show that $\widetilde{k}(t)$ is a decreasing function of $t$. We start by differentiating $\widetilde{k}$ with respect to $t$;
	\begin{equation}
		\dfrac{d}{dt} \widetilde{k}(t) = 2 \frac{\widetilde{\theta}_2(t)}{\widetilde{\theta}_3(t)^3} \left( \widetilde{\theta}_2'(t) \widetilde{\theta}_3(t) - \widetilde{\theta}_2(t) \widetilde{\theta}_3'(t)\right).
	\end{equation}
	Since $\widetilde{\theta}_2$ and $\widetilde{\theta}_3$ are positive, in order to show that $\widetilde{k}(t)$ is strictly decreasing, it suffices to show that
	\begin{equation}
		\widetilde{\theta}_2'(t) \widetilde{\theta}_3(t) < \widetilde{\theta}_3'(t) \widetilde{\theta}_2(t),
	\end{equation}
	which is equivalent to showing
	\begin{equation}
		t \frac{\widetilde{\theta}_2'(t)}{\widetilde{\theta}_2(t)} < t \frac{\widetilde{\theta}_3'(t)}{\widetilde{\theta}_3(t)}.
	\end{equation}
	We set $\phi_j (t) = t \frac{\widetilde{\theta}_j'(t)}{\widetilde{\theta}_j(t)}$ for $j \in \{2,3\}$ and use the following results given in \cite{Faulhuber_Determinants_2018} (see also \cite{FaulhuberSteinerberger_Theta_2017}). The function $\phi_2$ is strictly decreasing whereas the function $\phi_3$ is strictly increasing. Also
	\begin{equation}	
		\lim_{t \to 0} \phi_2(t) = \lim_{t \to 0} \phi_3(t) = - \frac{1}{2}.
	\end{equation}
	This proves the strict monotonicity. To show that the map is bijective between $\R_+$ and the interval $(0,1)$, we note that
	\begin{equation}
		\lim_{t \to 0} \frac{\widetilde{\theta}_2(t)}{\widetilde{\theta}_3(t)} = 1 \qquad \textnormal{ and } \qquad \lim_{t \to \infty} \frac{\widetilde{\theta}_2(t)}{\widetilde{\theta}_3(t)} = 0.
	\end{equation}
\end{proof}
Proposition \ref{pro_k} hence describes the behavior of the elliptic modulus $k(e^{-\pi t})$ (or the parameter $m(e^{-\pi t})$) as a function of $t$. The behavior is illustrated in Figure \ref{fig_modulus}.
\begin{figure}[htb]
	\subfigure[The behavior of $k(e^{-\pi t})$ as a function of $t$. For $t = 1$ the value of $k$ is $\tfrac{1}{\sqrt{2}}$.]
	{
		\includegraphics[width=.45\textwidth]{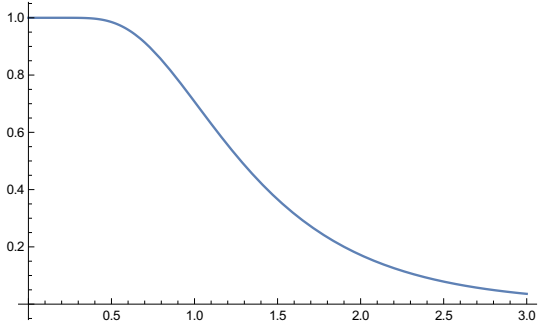}
	}
	\hfill
	\subfigure[The behavior of $m(e^{-\pi t})$ as a function of $t$. For $t = 1$ the value of $m$ is $\tfrac{1}{2}$.]
	{
		\includegraphics[width=.45\textwidth]{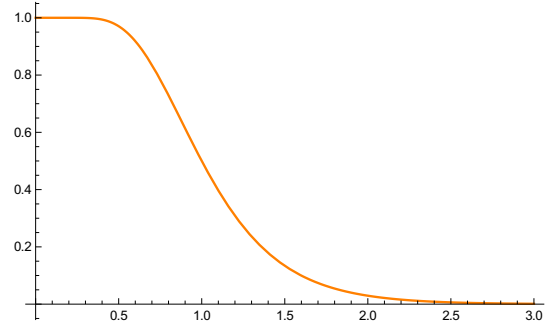}
	}
	\caption{The behavior of the elliptic modulus $k(e^{-\pi t})$ and the parameter $m(e^{-\pi t})$ as functions of the real variable $t$ as described in Proposition \ref{pro_k}.}\label{fig_modulus}
\end{figure}

We note that the result in Proposition \ref{pro_k} was also established in \cite[Prop.~3.7.]{BeterminPetrache_DimensionReduction_2017}, by showing, with more direct methods, that the complementary elliptic modulus is strictly increasing.

\section{Consequences for the Heat Kernel}\label{sec_consequences_hk}
We will now use the collected results to describe the behavior of the temperature on a rectangular torus. We start with the observation that the heat kernel can actually be written as a product of theta functions. For any $\alpha \in \R_+$, we have
\begin{equation}
	p_\alpha(x,y;t) = \vartheta_3 \left(x,e^{-\pi t \alpha^2} \right) \, \vartheta_3 \left(y,e^{-\pi t \alpha^{-2}} \right), \qquad x,y \in \R, \, t \in \R_+ \; .
\end{equation}
We will now give the arguments for the facts that
\begin{equation}\label{eq_min_max}
	A(\alpha;t) = p_\alpha \left( \tfrac{1}{2}, \tfrac{1}{2}; t\right) \qquad \textnormal{ and } \qquad B(\alpha;t) = p_\alpha \left( 0,0; t\right).
\end{equation}
The equation for $A$ follows from the product representation of $\vartheta_3$ and equation \eqref{eq_minimum}. Actually, from \eqref{eq_minimum} we get that
\begin{equation}
	p_\alpha \left( \tfrac{1}{2} + l_1, \tfrac{1}{2} + l_2; t\right) \leq p_\alpha(x,y;t), \qquad l_1, l_2 \in \Z, \, x,y \in \R,
\end{equation}
but since $p_\alpha$ is periodic with period $\Z^2$, we may focus on solutions in $[0,1) \times [0,1)$. The equation for $B$ follows readily by using the triangle inequality;
\begin{equation}
	p_\alpha(x,y;t) \leq \sum_{(k,l) \in \Z^2} \left|e^{- \pi t (\alpha^2 k^2 + \alpha^{-2} l^2)} \right| \left|e^{2 \pi i (k x + l y)} \right| = p_\alpha(m,n;t), \qquad (m,n) \in \Z^2.
\end{equation}
The results in \eqref{eq_min_max} are special cases of \cite[Prop.~3.7.]{BetKnu_Born_18} and \cite[Prop.~3.4.]{BeterminPetrache_DimensionReduction_2017}, respectively and were also derived in \cite[Sec.~6]{Jan96}.

We can now write $A$ and $B$ in terms of theta-nulls;
\begin{equation}
	A(\alpha;t) = \theta_4 \left(e^{-\pi t \alpha^2} \right) \, \theta_4 \left(e^{-\pi t \alpha^{-2}} \right)
	\qquad \textnormal{ and } \qquad
	B(\alpha;t) = \theta_3 \left(e^{-\pi t \alpha^2} \right) \, \theta_3 \left(e^{-\pi t \alpha^{-2}} \right).
\end{equation}
In \cite{FaulhuberSteinerberger_Theta_2017} we find the following result, which we adapted to the notation we use in this article.
For $\alpha, t \in \R_+$, the following is true;
\begin{equation}\label{eq_FauSteA}
	\theta_4 \left(e^{-\pi t \alpha^2} \right) \, \theta_4 \left(e^{-\pi t \alpha^{-2}} \right) \leq \theta_4 \left(e^{-\pi t} \right) \, \theta_4 \left(e^{-\pi t} \right).
\end{equation}
\begin{equation}\label{eq_FauSteB}
	\theta_3 \left(e^{-\pi t \alpha^2} \right) \, \theta_3 \left(e^{-\pi t \alpha^{-2}} \right) \geq \theta_3 \left(e^{-\pi t} \right) \, \theta_3 \left(e^{-\pi t} \right).
\end{equation}
In both cases equality holds if and only if $\alpha = 1$.

This shows that, for any fixed time $t$, among all rectangular tori the square torus uniquely maximizes the lowest temperature and uniquely minimizes the highest temperature. Recently, it was shown in \cite{Faulhuber_Curious_2019} that, for $\alpha \in \R_+$,
\begin{align}
	& \theta_3 \left(e^{-\pi t \alpha^2} \right) \, \theta_3 \left(e^{-\pi t \alpha^{-2}} \right) \geq \theta_3 \left(e^{-\pi t} \right) \, \theta_3 \left(e^{-\pi t} \right),\qquad \forall t \in \R_+\\
	\Longrightarrow & \; \theta_4 \left(e^{-\pi \alpha^2} \right) \, \theta_4 \left(e^{-\pi \alpha^{-2}} \right) \leq \theta_4 \left(e^{-\pi} \right) \, \theta_4 \left(e^{-\pi} \right).
\end{align}
Again, equality holds if and only if $\alpha = 1$.

We note that the above result needs the information that the square torus minimizes the highest temperature for all times to derive the analogous statement for the lowest temperature for $t=1$ as an implication. So far, the author was not able to extend the result to tori associated to arbitrary lattices. In particular, it would be very interesting to know whether Montgomery's result \cite{Montgomery_Theta_1988} already implies that the hexagonal torus uniquely maximizes the lowest temperature (at least for $t=1$) among all (regular) tori.

As a next step, we will give a different interpretation to the elliptic modulus and the parameter $t$. The usual interpretation of $t$ being time is one possibility, another possibility is to see it as the density of the lattice (which is the reciprocal of the area of the lattice), or, we could also say it is the product of time and density. We will have a look at what happens if we say time is fixed to 1 and $t$ represents the density of the rectangular lattice. In this case, the rectangular torus is represented by
\begin{equation}
	\T^2_{(\alpha;t)} = \R^2 \Big\slash \left( \tfrac{1}{\sqrt{t}} \left(\alpha^{-1} \Z \times \alpha \Z \right) \right) = \R^2 \slash \L_{(\alpha;t)}
\end{equation}
and its surface area is $t^{-1}$. By abusing notation once more, the associated heat kernel is (still) $p_{(\alpha;t)}(x,y;1) = p_\alpha (x,y;t)$. Furthermore, we now force our tori to be square, i.e., we set $\alpha = 1$. This leads to the following result.
\begin{lemma}\label{lem_temp}
	Let $p_{(1;t)}$ be the heat kernel of the square torus of surface area $t^{-1}$, $t \in \R_+$ and let $k' \in (0,1)$ be the ratio of the coldest and hottest temperature. Then, the coldest and hottest temperature on the torus $\T_{(1;t)}^2$ are given by
	\begin{equation}
		A(1;t) = k' \, {}_2F_1\left( \tfrac{1}{2}, \tfrac{1}{2}; 1; 1-k'^2 \right)
		\qquad \textnormal{ and } \qquad
		B(1;t) = {}_2F_1\left( \tfrac{1}{2}, \tfrac{1}{2}; 1; 1-k'^2 \right)
	\end{equation}
	respectively.
\end{lemma}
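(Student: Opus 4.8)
The plan is to specialize the theta-null representations of $A$ and $B$ derived at the start of Section~\ref{sec_consequences_hk} to the square case $\alpha = 1$, and then to convert the resulting products of theta-nulls into Gauss hypergeometric functions via Ramanujan's identity~\eqref{eq_magic}. Setting $\alpha = 1$ gives $A(1;t) = \theta_4(e^{-\pi t})^2$ and $B(1;t) = \theta_3(e^{-\pi t})^2$, so throughout I would work with the nome $q = e^{-\pi t}$ (equivalently $\tau = it$) and the corresponding modulus $k = k(q)$ and complementary modulus $k' = k'(q)$.

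First I would treat $B$. By~\eqref{eq_magic}, $\theta_3(q)^2 = {}_2F_1(\tfrac{1}{2},\tfrac{1}{2};1;k^2)$, and since $k^2 = 1 - k'^2$ this is precisely $B(1;t) = {}_2F_1(\tfrac{1}{2},\tfrac{1}{2};1;1-k'^2)$. For $A$ I would invoke the definition of the complementary modulus, $k' = \theta_4(q)^2/\theta_3(q)^2$, which rearranges to $\theta_4(q)^2 = k'\,\theta_3(q)^2$; feeding in the expression just obtained for $B$ yields $A(1;t) = \theta_4(q)^2 = k'\,\theta_3(q)^2 = k'\,{}_2F_1(\tfrac{1}{2},\tfrac{1}{2};1;1-k'^2)$, as claimed.

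It also remains to confirm that the $k'$ in the statement really is the advertised ratio of coldest to hottest temperature, and this falls out immediately: $A(1;t)/B(1;t) = \theta_4(q)^2/\theta_3(q)^2 = k'$, which lies in $(0,1)$ since $\theta_4(q) < \theta_3(q)$. I do not anticipate any genuine obstacle, since the argument is an assembly of identities already collected in Sections~\ref{sec_special} and~\ref{sec_consequences_hk}. The one point meriting a word of care is the legitimacy of indexing the conclusion by $k'$ rather than by $t$: Proposition~\ref{pro_k} guarantees that $t \mapsto \widetilde{k}(t)$, and hence $t \mapsto k'$, is a bijection onto $(0,1)$, so each admissible $k' \in (0,1)$ corresponds to a unique $t \in \R_+$ and the formulas are well-defined as functions of $k'$.
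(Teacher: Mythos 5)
Your proposal is correct and follows essentially the same route as the paper: both identify $k' = \theta_4(q)^2/\theta_3(q)^2$ as the temperature ratio (the paper via identity \eqref{eq_4}, you via the definition of the complementary modulus, which amounts to the same thing), then apply Ramanujan's formula \eqref{eq_magic} to obtain $B(1;t) = {}_2F_1\left(\tfrac{1}{2},\tfrac{1}{2};1;k^2\right)$ and multiply by $k'$ to obtain $A(1;t)$. Your closing remark on Proposition \ref{pro_k} ensuring the parametrization by $k'$ is well-defined is a small addition the paper defers to the proof of Theorem \ref{thm_T4}, but it does not change the argument.
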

\begin{proof}
	By using the connection in \eqref{eq_4} we find out that
	\begin{equation}\label{eq_modulus_tempreature}
		k(e^{-\pi t})^2 = \frac{\theta_2(e^{-\pi t})^4}{\theta_3(e^{-\pi t})^4} = 1 - \frac{\theta_4(e^{-\pi t})^4}{\theta_3(e^{-\pi t})^4} = 1 - \frac{A(1;t)^2}{B(1;t)^2}.
	\end{equation}
	Hence, the complementary elliptic modulus precisely describes the behavior of the ratio of the coldest and warmest point on the torus as time evolves (linearly);
	\begin{equation}
		k'(e^{-\pi t})^2 = \frac{\theta_4(e^{-\pi t})^4}{\theta_3(e^{-\pi t})^4} = \frac{A(1;t)^2}{B(1;t)^2}.
	\end{equation}
	By applying the, now seemingly magical, formula \eqref{eq_magic} of Ramanujan we get for a given value $k = k(e^{-\pi t}) \in (0,1)$ that
	\begin{equation}
		{_2F_1} \left(\tfrac{1}{2}, \tfrac{1}{2}; 1; k^2\right) = \theta_3(e^{-\pi t})^2 = B(1;t).
	\end{equation}
	It readily follows that
	\begin{equation}
		k' \, _2F_1 \left(\tfrac{1}{2}, \tfrac{1}{2}; 1; k^2\right) = k' \, \theta_3(e^{-\pi t})^2 = \theta_4(e^{-\pi t})^2 = A(1;t).
	\end{equation}
\end{proof}
This reveals a truly remarkable aspect of Ramanujan's formula \eqref{eq_magic}, as, for any time (or density) $t \in \R_+$, it allowed Ramanujan to split the complementary elliptic modulus $k'$ into the hottest and coldest temperature on the square torus $\T_{(1;t)}^2$ by only knowing their ratio. This leads to the following result for rectangular tori of surface area 1 and fixed time equal to 1.
\begin{theorem}\label{thm_T4}
	Let time be fixed to 1, set $\L_{(\alpha;1)} = \alpha^{-1} \Z \times \alpha \Z$ and consider the heat kernel $p_{(\alpha;1)}(x,y;1)$ on the torus $\T_{(\alpha;1)}^2 = \R^2 \slash \L_{(\alpha;1)}$. Let $k = k(e^{-\pi \alpha^2})$ and $k' = k'(e^{-\pi \alpha^2})$ be the elliptic and complementary elliptic modulus. Then, the minimal and maximal temperature are given by
	\begin{align}
		A(\alpha;1) & = A(k') = \sqrt{k \,k' \, {_2F_1} \left(\tfrac{1}{2}, \tfrac{1}{2}; 1; k^2\right) {_2F_1} \left(\tfrac{1}{2}, \tfrac{1}{2}; 1; k'^2\right)},\\
		B(\alpha;1) & = B(k') = \sqrt{{_2F_1} \left(\tfrac{1}{2}, \tfrac{1}{2}; 1; k^2\right) {_2F_1} \left(\tfrac{1}{2}, \tfrac{1}{2}; 1; k'^2\right)},
	\end{align}
	respectively.
\end{theorem}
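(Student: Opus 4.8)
The plan is to reduce everything to the two identities already recorded for the extremal temperatures, namely $A(\alpha;1) = \theta_4(e^{-\pi\alpha^2})\,\theta_4(e^{-\pi\alpha^{-2}})$ and $B(\alpha;1) = \theta_3(e^{-\pi\alpha^2})\,\theta_3(e^{-\pi\alpha^{-2}})$, and then to recognize the two hypergeometric factors on the right-hand side of the claim as theta-nulls evaluated at the two reciprocal nomes $q = e^{-\pi\alpha^2}$ and $q' = e^{-\pi\alpha^{-2}}$. The only genuine input beyond bookkeeping is the Jacobi imaginary transformation \eqref{eq_Poisson3} and \eqref{eq_Poisson24}, which I would apply with $\tau = i\alpha^{-2}$ (so that $e^{\pi i \tau} = e^{-\pi\alpha^{-2}}$ and $e^{-\pi i/\tau} = e^{-\pi\alpha^2}$, with $\sqrt{i/\tau} = \alpha$) to obtain $\theta_3(q') = \alpha\,\theta_3(q)$ and $\theta_4(q') = \alpha\,\theta_2(q)$.

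The decisive step is to identify ${}_2F_1(\tfrac12,\tfrac12;1;k'^2)$ correctly. Writing the modulus and complementary modulus at the nome $q$ as $k = \theta_2(q)^2/\theta_3(q)^2$ and $k' = \theta_4(q)^2/\theta_3(q)^2$, I would show that $k'$ is precisely the \emph{modulus} belonging to the reciprocal nome $q'$: substituting the transformation formulas gives $\theta_2(q')^2/\theta_3(q')^2 = (\alpha\theta_4(q))^2/(\alpha\theta_3(q))^2 = \theta_4(q)^2/\theta_3(q)^2 = k'$. Consequently Ramanujan's formula \eqref{eq_magic}, applied at the nome $q'$ with its own modulus $k'$, yields ${}_2F_1(\tfrac12,\tfrac12;1;k'^2) = \theta_3(q')^2 = \alpha^2\theta_3(q)^2$, while at the nome $q$ it gives ${}_2F_1(\tfrac12,\tfrac12;1;k^2) = \theta_3(q)^2$.

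From here the two claims drop out by direct multiplication. For $B$, the product of the two hypergeometric factors is $\theta_3(q)^2\cdot\alpha^2\theta_3(q)^2 = (\theta_3(q)\,\theta_3(q'))^2 = B(\alpha;1)^2$, so taking the positive square root (all theta-nulls are positive on $(0,1)$) gives the stated formula for $B$. For $A$, I would multiply the same product by $k\,k' = \theta_2(q)^2\theta_4(q)^2/\theta_3(q)^4$, which collapses to $\alpha^2\theta_2(q)^2\theta_4(q)^2$; since $\theta_4(q') = \alpha\theta_2(q)$, this equals $(\theta_4(q)\,\theta_4(q'))^2 = A(\alpha;1)^2$, and again the positive square root yields the claim.

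I expect the main obstacle to be purely notational: keeping straight which nome each modulus and each theta-null refers to, and verifying that the factors of $\alpha$ introduced by the transformation formulas are exactly absorbed by the outer square root. The conceptual core — that passing from $k$ to $k'$ corresponds to passing from $q$ to the reciprocal nome $q'$ under $\tau\mapsto -1/\tau$ — is where I would be most careful, since it is this symmetry that lets Ramanujan's single-nome identity \eqref{eq_magic} cover both hypergeometric factors at once.
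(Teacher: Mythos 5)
Your proof is correct: with $q=e^{-\pi\alpha^2}$, $q'=e^{-\pi\alpha^{-2}}$ and $\tau=i\alpha^{-2}$, the transformations \eqref{eq_Poisson3} and \eqref{eq_Poisson24} do give $\theta_3(q')=\alpha\,\theta_3(q)$, $\theta_2(q')=\alpha\,\theta_4(q)$, $\theta_4(q')=\alpha\,\theta_2(q)$, hence $k(q')=k'(q)$; the two applications of Ramanujan's identity \eqref{eq_magic}, combined with the product formulas $A(\alpha;1)=\theta_4(q)\,\theta_4(q')$ and $B(\alpha;1)=\theta_3(q)\,\theta_3(q')$ from Section \ref{sec_consequences_hk}, yield exactly the claimed squares, and the factors of $\alpha$ cancel as you say.

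Your route differs from the paper's in structure, though not in the underlying identities. The paper never multiplies theta-nulls by hand: it introduces the auxiliary four-dimensional torus $\T^4_{(\alpha,\alpha^{-1})}=\T^2_{(1;\alpha^2)}\times\T^2_{(1;\alpha^{-2})}$, observes that its heat kernel is the tensor product of the heat kernels of two \emph{square} tori with reciprocal densities, and applies Lemma \ref{lem_temp} to each factor; the swap $k'(e^{-\pi\alpha^2})=k(e^{-\pi\alpha^{-2}})$ --- your key step, proved there the same way from \eqref{eq_Poisson3} and \eqref{eq_Poisson24} --- then identifies the product of the two square-torus extremal temperatures with $k\,k'\,{}_2F_1\left(\tfrac{1}{2},\tfrac{1}{2};1;k^2\right){}_2F_1\left(\tfrac{1}{2},\tfrac{1}{2};1;k'^2\right)$, respectively the same product without the factor $k\,k'$, and the square root appears because the same 4-torus is also the product of two copies of the rectangular torus $\T^2_{(\alpha;1)}$ (with $\alpha$ and $\alpha^{-1}$ interchanged, which leaves $A$ and $B$ invariant). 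Your argument compresses this: you apply \eqref{eq_magic} directly at both nomes and track the factors of $\alpha$ explicitly, which is shorter and bypasses both Lemma \ref{lem_temp} and the 4-torus construction; what the paper's device buys is the interpretation of the quantities under the square roots as genuine extremal temperatures of a higher-dimensional torus, which motivates the statement. The one ingredient you omit is the appeal to Proposition \ref{pro_k}, used in the paper only to justify the reparametrization $A(\alpha;1)=A(k')$, $B(\alpha;1)=B(k')$ (each $k'\in(0,1)$ arises from exactly one $\alpha$); since your final formulas exhibit the dependence on $k'$ alone via $k^2=1-k'^2$, this is a cosmetic rather than a mathematical gap.
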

\begin{proof}
	As a consequence of Proposition \ref{pro_k} we know that any $\alpha \in \R_+$ can be uniquely identified with a value of the complementary elliptic modulus $k' \in (0,1)$.
	
	The next step is to study the temperature distribution on a certain family of 4-dimensional tori. This step might seem artificial at first, but our intentions will become clear rather soon. We consider the following family of tori;
	\begin{equation}
		\T^4_{(\alpha, \alpha^{-1})} = \R^4 \big/ \left(\alpha^{-1} \Z \times \alpha^{-1} \Z \times \alpha \Z \times \alpha \Z\right) = \T^2_{(1;\alpha^2)} \times \T^2_{(1;\alpha^{-2})}.
	\end{equation}
	Now, the heat kernel on this new, 4-dimensional torus is just the tensor product of the two heat kernels on the two square tori of dimension 2 with different densities, i.e.,
	\begin{equation}
		p_{(1;\alpha^2)} (x_1,y_1;1) \, p_{(1;\alpha^{-2})}(x_2,y_2;1), \qquad (x_1,y_1,x_2,y_2) \in \T^4_{(\alpha,\alpha^{-1})}.
	\end{equation}		
	However, after re-labeling this is just the tensor product of twice the same rectangular torus of density 1, i.e.,
	\begin{align}
		p_{(1;\alpha^2)} (x_1,y_1;1) \, p_{(1;\alpha^{-2})}(x_2,y_2;1)
		& = p_{(\alpha,1)} (x_1,y_1;1) \, p_{(\alpha^{-1},1)}(x_2,y_2;1)\\
		& = \vartheta_3(x_1,e^{-\pi \alpha^2}) \vartheta_3(y_1,e^{-\pi \alpha^2}) \vartheta_3(x_2,e^{-\pi \alpha^{-2}}) \vartheta_3(y_2,e^{-\pi \alpha^{-2}}).
	\end{align}		
	By the definition of $k(q)$ and $k'(q)$ and equations \eqref{eq_Poisson3} and \eqref{eq_Poisson24}, we get
	\begin{equation}
		k'(e^{-\pi \alpha^2}) = k(e^{-\pi \alpha^{-2}}).
	\end{equation}
	Now, by Lemma \ref{lem_temp} we conclude that the hottest and coldest point on $\T^4_{(\alpha, \alpha^{-1})}$ have temperature
	\begin{equation}
		{_2F_1} \left(\tfrac{1}{2}, \tfrac{1}{2}; 1; k^2\right) {_2F_1} \left(\tfrac{1}{2}, \tfrac{1}{2}; 1; k'^2\right) = \theta_3(e^{-\pi \alpha^2})^2 \theta_3(e^{-\pi \alpha^{-2}})^2
	\end{equation}
	and
	\begin{align}
		k \,k' \, {_2F_1} \left(\tfrac{1}{2}, \tfrac{1}{2}; 1; k^2\right) {_2F_1} \left(\tfrac{1}{2}, \tfrac{1}{2}; 1; k'^2\right)
		& = \theta_2(e^{-\pi \alpha^2})^2 \theta_2(e^{-\pi \alpha^{-2}})^2\\
		& = \theta_4(e^{-\pi \alpha^2})^2 \theta_4(e^{-\pi \alpha^{-2}})^2
	\end{align}
	respectively. By construction, it follows that for the torus $\T^2_{(\alpha;1)}$, we have the desired results;
	\begin{align}
		A(\alpha;1) = A(k') = \sqrt{k \,k' \, {_2F_1} \left(\tfrac{1}{2}, \tfrac{1}{2}; 1; k^2\right) {_2F_1} \left(\tfrac{1}{2}, \tfrac{1}{2}; 1; k'^2\right)}
		& = \theta_2(e^{-\pi \alpha^2}) \theta_2(e^{-\pi \alpha^{-2}})\\
		& = \theta_4(e^{-\pi \alpha^2}) \theta_4(e^{-\pi \alpha^{-2}}).
	\end{align}
	\begin{equation}
		B(\alpha;1) = B(k') = \sqrt{ {_2F_1} \left(\tfrac{1}{2}, \tfrac{1}{2}; 1; k^2\right) {_2F_1} \left(\tfrac{1}{2}, \tfrac{1}{2}; 1; k'^2\right)} = \theta_3(e^{-\pi \alpha^2}) \theta_3(e^{-\pi \alpha^{-2}}).
	\end{equation}
\end{proof}
By combining Theorem \ref{thm_T4} with formulas \eqref{eq_FauSteA} and \eqref{eq_FauSteB} we get the following result.
\begin{corollary}
	It follows that
	\begin{equation}
		A(k') \leq A \left(\tfrac{1}{\sqrt{2}}\right) \textnormal{ and } B(k') \geq B \left(\tfrac{1}{\sqrt{2}}\right)
	\end{equation}
	for all $k' \in (0,1)$, with equality if and only if $k' = k = \frac{1}{\sqrt{2}}$.
\end{corollary}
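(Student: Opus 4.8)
The plan is to reduce the corollary directly to the theta-function reformulation obtained inside the proof of Theorem \ref{thm_T4}, combined with the Faulhuber--Steinerberger inequalities \eqref{eq_FauSteA} and \eqref{eq_FauSteB}. The crucial observation is that the quantities $A(k')$ and $B(k')$ in the statement are, by Theorem \ref{thm_T4}, nothing but the products of theta-nulls
\begin{align}
	A(\alpha;1) &= \theta_4(e^{-\pi\alpha^2})\,\theta_4(e^{-\pi\alpha^{-2}}),\\
	B(\alpha;1) &= \theta_3(e^{-\pi\alpha^2})\,\theta_3(e^{-\pi\alpha^{-2}}),
\end{align}
where $\alpha \in \R_+$ is the parameter uniquely attached to $k' \in (0,1)$ through the bijection of Proposition \ref{pro_k}. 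Thus the corollary is really a statement about these products, and the extremal problem has already been solved.

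The first step would be to pin down the value of $\alpha$ corresponding to $k' = \tfrac{1}{\sqrt2}$. Since $k^2 + k'^2 = 1$, the equality $k' = \tfrac{1}{\sqrt2}$ forces $k = \tfrac{1}{\sqrt2}$ as well; and by the bijectivity together with the strict monotonicity of Proposition \ref{pro_k} there is exactly one $\alpha$ realizing this symmetric value, namely $\alpha = 1$ (equivalently $t = 1$, where $\widetilde k(1) = \tfrac{1}{\sqrt2}$). Evaluating the theta products at $\alpha = 1$ then gives $A\bigl(\tfrac{1}{\sqrt2}\bigr) = \theta_4(e^{-\pi})^2$ and $B\bigl(\tfrac{1}{\sqrt2}\bigr) = \theta_3(e^{-\pi})^2$.

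The second step is simply to invoke \eqref{eq_FauSteA} and \eqref{eq_FauSteB} with the time parameter fixed to $t = 1$. These yield exactly
\begin{equation}
	\theta_4(e^{-\pi\alpha^2})\,\theta_4(e^{-\pi\alpha^{-2}}) \le \theta_4(e^{-\pi})^2, \qquad \theta_3(e^{-\pi\alpha^2})\,\theta_3(e^{-\pi\alpha^{-2}}) \ge \theta_3(e^{-\pi})^2,
\end{equation}
with equality in both if and only if $\alpha = 1$. Translating back through the identification of the first step produces $A(k') \le A\bigl(\tfrac{1}{\sqrt2}\bigr)$ and $B(k') \ge B\bigl(\tfrac{1}{\sqrt2}\bigr)$, while the equality case $\alpha = 1$ corresponds precisely to $k = k' = \tfrac{1}{\sqrt2}$, as claimed.

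There is essentially no analytic obstacle: both the monotonicity and bijectivity of the modulus and the sharp theta inequalities have already been established earlier in the excerpt. The only point that demands a little care is the bookkeeping of the first step, namely checking that the normalization of Theorem \ref{thm_T4} (in which $k = k(e^{-\pi\alpha^2})$) makes $\alpha = 1$ the unique preimage of $k' = \tfrac{1}{\sqrt2}$, so that the extremal configuration of the Faulhuber--Steinerberger inequalities is correctly matched with the symmetric point $k = k'$ of the corollary.
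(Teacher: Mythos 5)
Your proposal is correct and takes essentially the same route as the paper: the corollary is obtained by reading off $A(\alpha;1)$ and $B(\alpha;1)$ as the theta-null products from Theorem \ref{thm_T4} and applying the sharp inequalities \eqref{eq_FauSteA} and \eqref{eq_FauSteB} at $t=1$, with the equality case $\alpha=1$ matched to $k=k'=\tfrac{1}{\sqrt{2}}$ (the paper verifies this identification via \eqref{eq_Poisson3} and \eqref{eq_Poisson24}, while you use the bijectivity from Proposition \ref{pro_k} — an equivalent bookkeeping step).
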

The case $k' = k = \tfrac{1}{\sqrt{2}}$ corresponds to the case of the square torus, i.e., $\alpha = 1$. That is best seen by using \eqref{eq_Poisson3} and \eqref{eq_Poisson24} which yield, for $q = e^{-\pi}$ ($\Leftrightarrow \alpha = 1 \Leftrightarrow \tau = i$),
\begin{equation}
	k(e^{-\pi})^2 = k'(e^{-\pi})^2 = \frac{\theta_2(e^{-\pi})^4}{\theta_3(e^{-\pi})^4} = \frac{\theta_4(e^{-\pi})^4}{\theta_3(e^{-\pi})^4} = \frac{A(1;1)^2}{B(1;1)^2} = \frac{1}{2}.
\end{equation}
So, among all rectangular tori, the lowest temperature is maximal for the square torus; likewise the hottest temperature is minimal for the square torus. In the following figures we have the temperatures and their ratio plotted as a function of the complementary elliptic modulus and the complementary parameter. We note that the plots are invariant under the substitution $m' \mapsto 1-m'$. This symmetry is not apparent when we use the modulus.

\begin{figure}[ht]
	\subfigure[Behavior of the minimal temperature $A(k')$ as a function of the complementary elliptic modulus.]
	{
		\includegraphics[width=.45\textwidth]{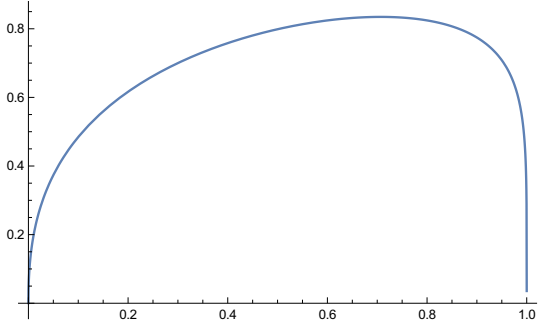}
	}
	\hfill
	\subfigure[Behavior of the minimal temperature $A(m')$ as a function of the complementary parameter.]
	{
		\includegraphics[width=.45\textwidth]{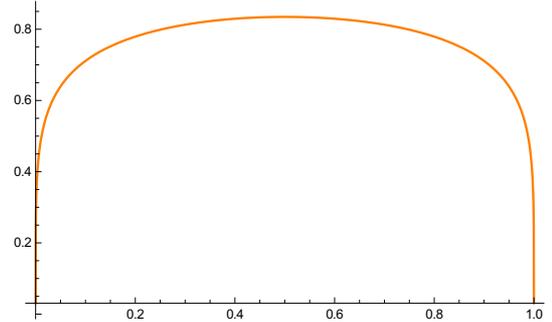}
	}
	\caption{The minimal temperature is maximal if $k'^2 = m' = \tfrac{1}{2}$ and the value is $\tfrac{1}{\sqrt{2}} \, {_2F_1} \left(\tfrac{1}{2}, \tfrac{1}{2}; 1; \tfrac{1}{2}\right) \approx 0.834627 \ldots$ .}
\end{figure}
\begin{figure}[ht]
	\subfigure[Behavior of the maximal temperature $B(k')$ as a function of the complementary elliptic modulus.]
	{
		\includegraphics[width=.45\textwidth]{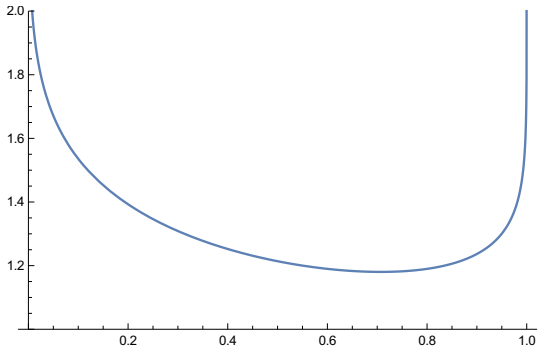}
	}
	\hfill
	\subfigure[Behavior of the maximal temperature $B(m')$ as a function of the complementary parameter.]
	{
		\includegraphics[width=.45\textwidth]{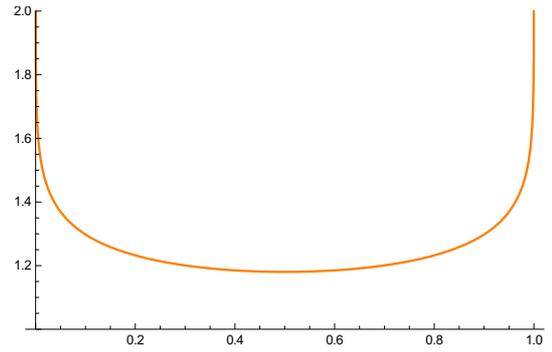}
	}
	\caption{The maximal temperature is minimal if $k'^2 = m' = \tfrac{1}{2}$ and the value is ${_2F_1} \left(\tfrac{1}{2}, \tfrac{1}{2}; 1; \tfrac{1}{2}\right) \approx 1.18034 \ldots$ .}
\end{figure}
\begin{figure}[ht]
	\subfigure[Behavior of the temperature ratio $A(k')/B(k')$ as a function of the complementary elliptic modulus.]
	{
		\includegraphics[width=.45\textwidth]{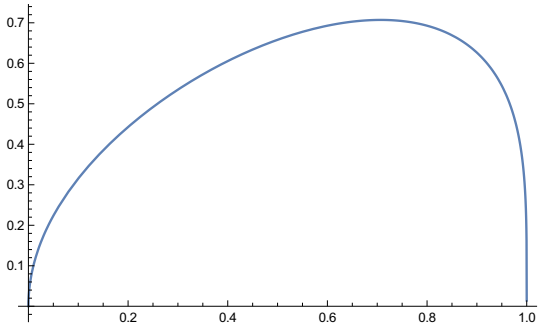}
	}
	\hfill
	\subfigure[Behavior of the temperature ratio $A(m')/B(m')$ as a function of the complementary parameter.]
	{
		\includegraphics[width=.45\textwidth]{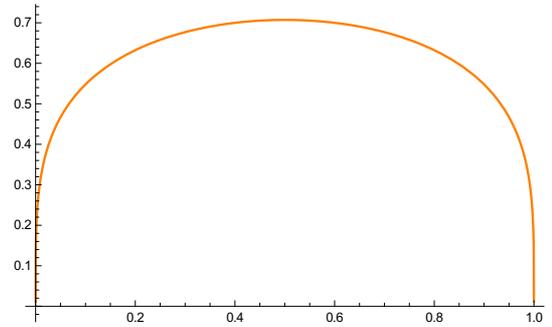}
	}
	\caption{The temperature ratio $A/B$ is most balanced if $k'^2 = m' = \tfrac{1}{2}$ and the value is $\tfrac{1}{\sqrt{2}} \approx 0.707107 \ldots$ .}
\end{figure}

\section{The Heat Kernel Problem for General Tori}\label{sec_tori}
We will now give a brief description of the heat kernel on general tori
\begin{equation}
	\T^2_\L = \R^2 \slash \L,
\end{equation}
where $\L$ is a lattice in $\R^2$ with area 1. A lattice of area 1 is a discrete, co-compact subgroup of $\R^2$ and can be represented by a matrix $M \in SL(2,\R)$;
\begin{equation}
	\L = M \Z^2.
\end{equation}
The columns $v_1$, $v_2$ of the matrix $M = (v_1,v_2)$ serve as the basis for $\L$, which is the integer span of $v_1$ and $v_2$;
\begin{equation}
	\L = \{k v_1 + l v_2 \mid (k,l) \in \Z^2, M = (v_1, v_2) \in SL(2,\R)\}.
\end{equation}
The matrix generating a lattice is not unique, as we can choose another basis for the lattice. In particular, if $\mathfrak{B} \in SL(2,\Z)$ (it only has integer entries and determinant 1), then
\begin{equation}
	\L = (M \mathfrak{B}) \Z^2 = M \Z^2.
\end{equation}
For details on the group $SL(2,\Z)$ and its properties we refer to the textbook by Serre \cite{Serre_1973}.

First, we recall the special case of the standard torus $\T^2 = \R^2 \slash \Z^2$. Its heat kernel is given by
\begin{align}
	p_{\Z^2}(x,y;t) & = \sum_{(k,l) \in \Z^2} e^{- \pi t (k^2 + l^2)} e^{2 \pi i (k x + l y)}\\
	& = \tfrac{1}{t} \sum_{(k,l) \in \Z^2} e^{- \pi \tfrac{1}{t} \left((k+x)^2 + (l+y)^2\right)}, \quad (x,y) \in \T^2, \, t \in \R_+,
\end{align}
where the second equality follows from the Poisson summation formula.

In order to gain a more complete picture, we introduce, in the spirit of Montgomery, the following (real) lattice theta function;
\begin{equation}
	\widetilde{p}_{\L}(z;t) = \sum_{\l \in \L} e^{- \pi t (\l + z)^2}, \quad z \in \R^2, \, t \in \R_+.
\end{equation}
By using the Poisson summation formula, it is possible to express the heat kernel on $\T^2_\L$ by this lattice theta function;
\begin{equation}\label{eq_lattice_theta}
	p_{\L^\perp}(z;t) = \sum_{\l^\perp \in \L^\perp} e^{- \pi t {\l^\perp}^2} e^{2 \pi i \l^\perp \cdot z}
	= \tfrac{1}{t} \sum_{\l \in \L} e^{- \pi \tfrac{1}{t} (\l + z)^2} = \tfrac{1}{t} \, \widetilde{p}_{\L}(z;\tfrac{1}{t}),
\end{equation}
where $\l^\perp \cdot z$ is the Euclidean inner product of $\l^\perp$ and $z$, ${\l^\perp}^2 = \l^\perp \cdot \l^\perp$ and
\begin{equation}
	\L^\perp = M^{-T} \Z^2 = \left(M^{-1}\right)^T \Z^2
\end{equation}
is the dual lattice to $\L$. Note that both functions in \eqref{eq_lattice_theta} are $\L$-periodic and express the heat kernel $p_\L(z;t)$. However, the notation in \eqref{eq_lattice_theta} seems more appropriate and is more consistent with the concepts and notation we are about to introduce below.

For lattices in $\R^2$ of unit area, the dual lattice is just a 90 degrees rotated version of the original lattice. This is sometimes irritating when working with quadratic forms associated to lattices and can be overcome by using the symplectic version of the Poisson summation formula. A matrix $S \in SL(2d, \R)$, $d \in \N$, is called symplectic, if and only if
\begin{equation}\label{eq_symplectic}
	S J S^T = J,
\end{equation}
where
$
	J = \begin{pmatrix}
		0 & I\\
		-I & 0
	\end{pmatrix}
$ is the standard symplectic matrix and $I$ is the identity matrix in $SL(d,\R)$. We note that symplectic matrices form a group under matrix multiplication, denoted by $Sp(d)$, which, in general is a subset of $SL(2d,\R)$.

We note that any $S \in SL(2,\R)$ is symplectic, i.e., $Sp(1) = SL(2,\R)$. Let $\L = S \Z^2$, $S \in SL(2, \R)$, then the adjoint lattice $\L^\circ$ is given by
\begin{equation}\label{eq_sympletic_lattice_adjoint}
	\L^\circ = J S^{-T} \Z^2 = \underbrace{J S^{-T} J^{-1}}_{= S} \Z^2 = \L,
\end{equation}
as $J^{-1} \Z^2 = \Z^2$ and \eqref{eq_symplectic}. The standard symplectic form is given by
\begin{equation}
	\sigma(z,z') = z \cdot J z' = x y' - x' y, \quad z=(x,y), \, z'=(x',y')
\end{equation}
and replaces the Euclidean inner product in symplectic geometry and is also used in Hamiltonian mechanics \cite{Gos11}. The symplectic Fourier transform is given by
\begin{equation}
	\mathcal{F}_\sigma f(z) = \int_{\R^2} f(z) e^{- 2 \pi i \, \sigma(z,z')} \, dz', \quad z \in \R^2,
\end{equation}
for suitable $f$, e.g., in the Schwartz space. With these definitions at hand, we can now introduce the symplectic Poisson summation formula as used in \cite{Faulhuber_Note};
\begin{equation}
	\sum_{\l \in \L} f(\l + z) = \sum_{\l^\circ \in \L^\circ} \mathcal{F}_\sigma f(\l^\circ) \, e^{2 \pi i \, \sigma(\l^\circ, z)}.
\end{equation}
For more details on the symplectic Fourier transform we refer to \cite{Gos11}.

With this tool, we have yet another way to express the heat kernel on the torus $\T^2_\L$;
\begin{equation}\label{eq_symplectic_theta}
	p_{\L^\circ}(z;t) = \sum_{\l^\circ \in \L^\circ} e^{- \pi t {\l^\circ}^2} e^{2 \pi i \, \sigma(\l^\circ, z)}
	= \tfrac{1}{t} \sum_{\l \in \L} e^{- \pi \tfrac{1}{t} (\l + z)^2} = \tfrac{1}{t} \, \widetilde{p}_{\L}(z;\tfrac{1}{t}) 
\end{equation}
Since we are dealing with symplectic lattices, i.e., $\L = S \Z^2$ with $S \in Sp(1) = SL(2, \R)$, we can replace $\L^\circ$ by $\L$ due to \eqref{eq_sympletic_lattice_adjoint}. Note that the difference between \eqref{eq_lattice_theta} and \eqref{eq_symplectic_theta} is actually only given by a rotation of the dual lattice by 90 degrees, making it the adjoint lattice, and using the symplectic form instead of the Euclidean inner product in the Fourier series to compensate for the rotation. However, in 2 dimensions any lattice of unit area is its own adjoint lattice and therefore we have the following Jacobi-like identity;
\begin{equation}
	p_\L(z;t) = \tfrac{1}{t} \, \widetilde{p}_\L(z;\tfrac{1}{t}).
\end{equation}
In particular, for $z = 0$ this yields
\begin{equation}
	p_\L(0;t) = \tfrac{1}{t} \, p_\L(0;\tfrac{1}{t}).
\end{equation}
Furthermore, we note that the functions involved in the symplectic Poisson summation formula are 2-dimensional, normalized Gaussians and they are eigenfunctions of the symplectic Fourier transform with eigenvalue 1. This is an easy adaption of the result on the Fourier transform of Gaussians in Folland's textbook \cite[Append.~A]{Fol}, which involves the dual lattice.

Alternatively, the heat kernel on the torus $\T^2_\L$ can also be seen as the heat kernel on the standard torus $\T^2$ with a different metric. Staying close to the notation from above we get;
\begin{align}\label{eq_hk_metric}
	\widetilde{p}_{(\Z^2, S)}(z;\tfrac{1}{t}) & = \tfrac{1}{t} \sum_{\l \in \Z^2} e^{- \pi \tfrac{1}{t} \left( S \left(\l + z\right) \right)^2}
	= \tfrac{1}{t} \sum_{\l \in \Z^2} e^{- \pi \tfrac{1}{t} \left(\l + z\right)^T S^T S \left(\l + z \right)}\\
	& = \sum_{\l \in \Z^2} e^{- \pi t \, \l^T S^T S \l} e^{2 \pi i \, \sigma(\l,z)} = p_{(\Z^2,S)}(z;t).
\end{align}
where $z \in \T^2$ and $S$ is a symplectic matrix defining the metric \footnote{Note that in \eqref{eq_hk_metric} the sum is over the integer lattice $\Z^2$.}. The equality to the second line in \eqref{eq_hk_metric} is the result of using the symplectic Poisson summation formula and the fact that 2-dimensional Gaussians are eigenfunctions of the symplectic Fourier transform with eigenvalue 1 (see, e.g.~\cite{Faulhuber_Note}).

Similar to Section \ref{sec_hk_rectangular}, we denote the minimal and the maximal temperature on a general torus of unit area by
\begin{equation}
	A_\L(t) = \min_{z \in \T^2_{\L}} p_{\L}(z;t)
	\qquad \textnormal{ and } \qquad
	B_\L(t) = \max_{z \in \T^2_{\L}} p_{\L}(z;t),
\end{equation}
respectively. We define the following constants, similar in style to Landau's ``Weltkonstante";
\begin{equation}\label{eq_AB}
	\mathcal{A}^*(t) = \max_{\L} A_\L(t)
	\qquad \textnormal{ and } \qquad
	\mathcal{B}_*(t) = \min_{\L} B_\L(t).
\end{equation}
The corresponding problem is, for fixed $t$, to find the exact values of $\mathcal{A}^*$ and $\mathcal{B}_*$. As we have seen, if we restrict our attention to rectangular tori, then the answer for both cases is derived for the square torus. We will deal with the corresponding problem for general tori in the next section.

\section{The Hexagonal Torus and Ramanujan's Corresponding Theories}
In his 1914 article \cite{Ramanujan_Modular} Ramanujan established ``corresponding theories" to the theory of theta functions, however without proof (see also \cite[Chap.~33]{RamanujanV}). The theories Ramanujan anticipated, rely on Gauss' hypergeometric functions
\begin{equation}
	{}_2F_1 \left(\tfrac{1}{r}, \tfrac{r-1}{r};1; \, . \, \right),
\end{equation}
with $r = 2,3,4,6$. The theories are usually referred to as theory of signature 2, 3, 4 and 6, respectively \cite[Chap.~33]{RamanujanV}.

In \cite{Ramanujan_Collected} the editors give the following quote of Morell:``It is unfortunate that Ramanujan has not developed in detail the corresponding theories" (see also \cite[Chap.~5.5]{BorBor_Pi} and \cite[Chap.~33, Sec.~1]{RamanujanV}). The proofs were given much later by Borwein and Borwein \cite{BorBor_Pi}, \cite{BorBor_Cubic_91}. However, in \cite{BorBor_Pi}, after the proofs have been established, the authors state:``The explanation as provided by this section is a bit disappointing, since for all these theories, all we have are well-concealed versions of the original theory of $K$."\footnote{$K$ is the complete elliptic integral of the first kind.}

We will show that the theory of signature 3 is intimately connected to the hexagonal torus, just as we have seen that the theory of signature 2 is connected to the square torus (recall the results in Section \ref{sec_consequences_hk}). This may shed new light on Ramanujan's corresponding theories.

The generating matrix for the hexagonal lattice is given by
\begin{equation}
	S_h = \tfrac{\sqrt{2}}{\sqrt[4]{3}}
	\begin{pmatrix}
		1 & \tfrac{1}{2}\\
		0 & \tfrac{\sqrt{3}}{2}
	\end{pmatrix}.
\end{equation}
We will denote the hexagonal lattice by
\begin{equation}
	\L_h = S_h \Z^2.
\end{equation}
The name refers to the fact that its Voronoi cell \cite{ConSlo99} is a regular hexagon. Alternatively, it is sometimes called a triangular lattice as half of its fundamental domain, so to say its fundamental triangle, is an equilateral triangle.

The lattice theta function\footnote{In this section we will not distinguish between $\widetilde{p}_\L$ and $p_\L$.} which describes the heat kernel on the hexagonal torus $\T^2_{\L_h}$ is given by
\begin{align}
	p_{\L_h}(z;t) & = \tfrac{1}{t} \sum_{(k,l) \in \Z^2} e^{-\pi \tfrac{1}{t} \tfrac{2}{\sqrt{3}} \left((k+x)^2 + (k+x)(l+y) + (l+y)^2\right)}\\
	& = \sum_{(k,l) \in \Z^2} e^{- \pi t \tfrac{2}{\sqrt{3}} \left(k^2 + kl + l^2\right)} e^{2 \pi i (ky - lx)}, \quad z = (x,y) \in \T^2.
\end{align}
We note that we actually used the more explicit formula for the heat kernel on the standard torus $\T^2$ with hexagonal metric $S_h$. However, we will keep this notation in the sequel. Also, note the minus sign in the complex exponential, which is the result of using the symplectic version of the Poisson summation formula. Again, by using the triangle inequality, it is easy to show that
\begin{equation}
	p_{\L_h}(z;t) \leq p_{\L_h}(0;t), \qquad \forall z \in \T^2, \, t \in \R_+.
\end{equation}
This result actually holds for any lattice $\L$, not only for $\L_h$. Finding the minimal value of $p_\L(z;t)$ is in general a hard task as already remarked in \cite{Baernstein_HeatKernel_1997}. Numerical experiments also show that, in general, the location of the minimal value depends on $t$. Interestingly, this was not the case for rectangular tori and, also, it is not the case for the hexagonal torus;
\begin{equation}
	p_{\L_h}\left( \left( \tfrac{1}{3}, \tfrac{1}{3} \right); t\right) = p_{\L_h}\left( \left( \tfrac{2}{3}, \tfrac{2}{3} \right); t\right) \leq p_{\L_h}(z;t), \qquad \forall z \in \T^2, \, t \in \R_+.
\end{equation}
This is just an adaption of the result of Baernstein \cite{Baernstein_HeatKernel_1997} who showed that the minimal temperature on the hexagonal torus is taken at the barycenter of a fundamental (equilateral) triangle. It is the high symmetry of the rectangular and hexagonal lattices, which force the minimal value to  stay in one place for all time $t \in \R_+$.
\begin{figure}[ht]
	\subfigure[The heat distribution on a torus with standard metric.]{
		\includegraphics[width=.45\textwidth]{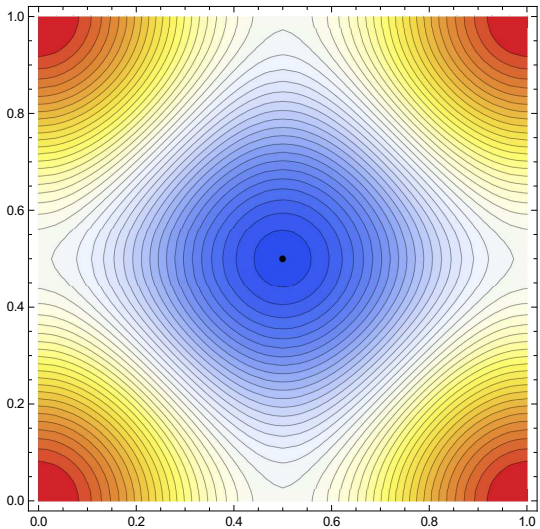}
	}
	\hfill
	\subfigure[The heat distribution on a torus with the hexagonal metric $S_h$.]{
		\includegraphics[width=.45\textwidth]{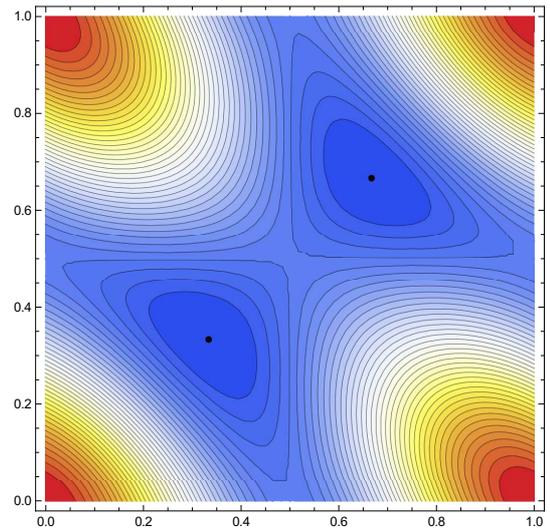}
	}
	\caption{Illustration of the heat distribution on a torus with standard metric and a torus with hexagonal metric for $t = 1$. The maximal temperature is taken in the corners, i.e., at lattice points. The position of the minimal temperature is marked.}
\end{figure}

We will now state some results analogous to Ramanujan's formula \eqref{eq_magic} for the hypergeometric function ${}_2F_1\left(\tfrac{1}{3},\tfrac{2}{3};1;\tfrac{1}{2}\right)$ involving cubic analogues of the squares of Jacobi's theta functions \cite{BorBor_Cubic_91}. The cubic analogues to the squares of Jacobi's theta functions, in the notation of \cite{BorBor_Cubic_91}, are the functions
\begin{equation}
	a(q) = \sum_{(k,l) \in \Z^2} q^{k^2 + k l + l^2},
\end{equation}
\begin{equation}
	b(q) = \sum_{(k,l) \in \Z^2} q^{k^2 + k l + l^2} e^{2 \pi i \left( \tfrac{k}{3} - \tfrac{l}{3} \right)}
	\qquad \textnormal{ and } \qquad
	c(q) = \sum_{(k,l) \in \Z^2} q^{\left(k+\tfrac{1}{3}\right)^2 + \left(k+\tfrac{1}{3}\right)\left(l+\tfrac{1}{3}\right) + \left(l+\tfrac{1}{3}\right)^2}.
\end{equation}
They fulfill
\begin{equation}\label{eq_cubic}
	a(q)^3 = b(q)^3+c(q)^3
\end{equation}
and by setting
\begin{equation}
	s = \frac{c(q)}{a(q)} \qquad \textnormal{ and } \qquad s' = \frac{b(q)}{a(q)}
\end{equation}
we have a cubic analogue to the squared (complementary) elliptic modulus. Also, we have
\begin{equation}\label{eq_magic_hex}
	{}_2F_1\left(\tfrac{1}{3},\tfrac{2}{3};1;s^3\right) = a(q),
\end{equation}
which was claimed already by Ramanujan \cite[Chap.~33]{RamanujanV} and then proven in \cite[Thm.~2.3.]{BorBor_Cubic_91}.

We note that the extremal temperatures on the hexagonal torus $\T_{\L_h}$ can be expressed by $a(q), \, b(q), \, c(q)$;
\begin{equation}
	A_{\L_h}(t) = b \left( e^{- \pi \tfrac{2}{\sqrt{3}} t} \right) = \tfrac{1}{t} \, c \left( e^{- \pi \tfrac{2}{\sqrt{3}} \tfrac{1}{t}} \right)
	\quad \textnormal{ and } \quad
	B_{\L_h}(t) = a \left( e^{- \pi \tfrac{2}{\sqrt{3}} t} \right) = \tfrac{1}{t} \, a \left( e^{- \pi \tfrac{2}{\sqrt{3}} \tfrac{1}{t}} \right).
\end{equation}
This shows the intimate connection of the theory of signature 3 and the hexagonal torus.

Furthermore, we note that for all the cases we treated in this article so far, we have root systems in the background. These are finite vector systems of high symmetry. For a finite dimensional vector space $V$, a finite collection of vectors, denoted by $R$, is called a root system if it fulfills the following properties;
\begin{enumerate}[(i)]
	\item The elements of $R$ do not contain 0 and span $V$.
	\item For any root $\alpha \in R$, the only scalar multiples of $\alpha$ contained in $R$ are $\alpha$ and $-\alpha$.
	\item For $\alpha, \beta \in R$, the set $R$ contains the element
	\begin{equation}	
		s_\alpha(\beta) = \beta - 2 \frac{\alpha \cdot \beta}{\alpha \cdot \alpha} \, \alpha.
	\end{equation}
	\item For $\alpha, \beta \in R$, we have
	\begin{equation}
		2 \frac{\alpha \cdot \beta}{\alpha \cdot \alpha} \in \Z.
	\end{equation}
\end{enumerate}
In $\R^2$ there are, up to isomorphy, 4 root systems, 2 of which generate the square lattice and 2 of them generate the hexagonal lattice \footnote{If one is a bit more exact, then one of the two root systems generating the square lattice is actually allowed to have generating vectors of different lengths (it is the tensor product of two one-dimensional root systems). Therefore, this root system also generates any rectangular lattice.}. For further reading we refer to the textbooks \cite[Chap.~6]{Bou02} and \cite[Chap.~8]{Hal15} and for the importance of root systems to special functions we refer to \cite{Mac71}. The theories with signature 2 and 4 each connect to one of the root systems generating the square lattice. The theories of signature 3 and 6 each connect to a root systems yielding hexagonal lattices.

Also, we note that we can write the heat kernel in dependence of $\tau \in \mathbb{H}$, defining a lattice in $\C$, i.e.,
\begin{equation}
	\L_\tau = \lbrace Im(\tau)^{-1/2} (k + \tau l) \mid k,l \in \Z , \, \tau \in \mathbb{H} \rbrace,
\end{equation}
where $Im(\tau)^{-1/2}$ normalizes the lattice to have area 1. The heat kernel with metric induced by $\tau$ is then given by
\begin{equation}
	p_\tau(z;t) = \tfrac{1}{t} \sum_{\l_\tau \in {\L_\tau}} e^{-\pi \tfrac{1}{t} \, |\l_\tau + z|^2} = \sum_{\l_\tau \in {\L_\tau}} e^{-\pi t \, |\l_\tau|^2} e^{2 \pi i \, Im\left( \overline{\l_\tau} \, z \right)}.
\end{equation}
The special choice $\tau = i$, gives the square torus and the sum is actually over the Gaussian integers. For the choice $\tau = e^{\pi i /3} = \tfrac{1+i\sqrt{3}}{2}$ we have the hexagonal torus and the sum is over the Eisenstein integers (scaled by a factor $\tfrac{\sqrt{2}}{3^{1/4}}$). Therefore, we can also refer to the cases as the lemniscatic case and the equianharmonic case \cite[Chap.~18]{AbrSte72}. The expert may also draw the connection to complex elliptic curves or, possibly relevant for analogous problems in higher dimensions, the connection to Lambert series as shown up in \cite{BorBor_Cubic_91}.

Last in this section, we note that the main result in Montgomery's article \cite{Montgomery_Theta_1988} is (equivalent to) the following statement;
\begin{equation}\label{eq_Montgomery}
	B_{\L_h}(t) \leq B_\L(t), \qquad \forall t \in \R_+,
\end{equation}
with equality if and only if $\L$ is a hexagonal lattice.

The question that remains open is whether or not the hexagonal torus uniquely maximizes the lowest temperature;
\begin{equation}\label{eq_conjecture}
	A_{\L_h}(t) \stackrel{(?)}{\geq} A_\L(t), \qquad \forall t \in \R_+,
\end{equation}
with equality if and only if $\L$ is a hexagonal lattice. This question has received less attention, but is of great interest in time-frequency analysis and signal reconstruction. In particular, the problem arises as a natural question in \cite{Faulhuber_PhD_2016} as a combination of Montgomery's result and the conjecture of Strohmer and Beaver on optimal Gaussian Gabor frames \cite{StrBea03}. The author conjectures that \eqref{eq_conjecture} indeed holds if and only if $\L$ is hexagonal.

\section{Landau's ``Weltkonstante" and a related Problem}
In this section we will briefly discuss the conjectured values of Landau's problem and the related problem formulated in \cite{Baernstein_Metric_2005} and \cite{Eremenko_Hyperbolic_2011}. A reformulation of Landau's problem is given in \cite{BaernsteinVinson_Local_1998} and is as follows. Let $\Gamma \subset \C$ be a (relatively separated) discrete set and consider the universal cover of $\C \backslash \Gamma$ by $\D$. In the case that $\Gamma$ is a lattice, this is the universal cover of the once punctured (complex) torus. Rademacher's conjecture on the precise value of $\LC$ can now be formulated as follows \cite{BaernsteinVinson_Local_1998}.
\begin{conjecture}\label{con_Rad}
	For each discrete subset $\Gamma \subset \C$, each universal covering map $f$ of $\D$ onto $\C \backslash \Gamma$, and each $z \in \C$ the following holds;
	\begin{equation}
		\frac{\left(1-|z|^2\right) |f'(z)|}{r(f)} \leq |f_h'(0)|.
	\end{equation}
\end{conjecture}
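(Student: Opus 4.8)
The plan is to translate Conjecture \ref{con_Rad} into a comparison of hyperbolic metrics and then to attack the resulting scale-invariant extremal problem with the heat-kernel machinery developed above. First I would use that a universal covering map $f \colon \D \to \C \backslash \Gamma$ pulls back the Poincaré metric of $\D$: writing $\rho_\Gamma$ for the density of the hyperbolic metric on $\C \backslash \Gamma$ (normalized to curvature $-1$), one has $\rho_\Gamma(f(z)) \, |f'(z)| = (1-|z|^2)^{-1}$, so that $(1-|z|^2)|f'(z)| = 1/\rho_\Gamma(f(z))$. As $z$ ranges over $\D$ the point $f(z)$ ranges over all of $\C \backslash \Gamma$, hence the inequality of Conjecture \ref{con_Rad}, demanded for every $z$, is equivalent to the single bound
\[
	r(f) \, \inf_{w \in \C \backslash \Gamma} \rho_\Gamma(w) \; \geq \; \frac{1}{|f_h'(0)|},
\]
the infimum being attained at the point of $\C \backslash \Gamma$ farthest from $\Gamma$, i.e.\ at the centre of a largest empty disc, whose radius is exactly the covering radius $r(f)$. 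Since $r(f)$ and $\inf \rho_\Gamma$ scale oppositely under dilation of $\Gamma$, the left-hand side is a \emph{scale-invariant shape functional} $\Phi(\Gamma) := r(f)\,\inf_w \rho_\Gamma(w)$, and the conjecture becomes the assertion that $\Phi$ is minimized by the hexagonal lattice $\L_h$, with minimal value $1/|f_h'(0)|$.

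The second step is a reduction to lattices. Following the reformulation of \cite{BaernsteinVinson_Local_1998} one restricts attention to complements of relatively separated sets; monotonicity of the hyperbolic metric under inclusion (deleting points of $\Gamma$ only decreases $\rho_\Gamma$) together with a periodization/symmetrization argument should drive the extremizer of $\Phi$ onto a lattice $\Gamma = \L$, so that $\Phi$ becomes a functional of the lattice shape alone. This is where the heat kernel enters, realizing Baernstein's program \cite{Baernstein_HeatKernel_1997}: the idea is to manufacture from $p_\L(z;t)$ a conformal comparison metric $e^{2u}\,|dz|^2$ on $\C \backslash \L$ whose Gaussian curvature is $\leq -1$. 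The Ahlfors--Schwarz lemma (\cite{Ahl38}, \cite{Ahl73}) then forces $e^{u} \leq \rho_\L$, turning a lower estimate for the heat-kernel metric at its coldest point into a lower estimate for $\inf_w \rho_\L(w)$. By the symmetry results recalled above the coldest point of $p_\L$ coincides with the deep hole for rectangular and hexagonal lattices, so $\inf_w \rho_\L$ is governed by the minimal temperature $A_\L(t)$; Montgomery's theorem \eqref{eq_Montgomery} controls the companion maximal-temperature quantity, and the conjectured extremality \eqref{eq_conjecture} of the hexagonal lattice for the \emph{lowest} temperature is exactly what would certify that $\L_h$ minimizes $\Phi$.

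The main obstacles, which are jointly the reason the statement remains only a conjecture, are twofold. First, \eqref{eq_conjecture}---that the hexagonal torus maximizes the lowest temperature among all lattices---is itself open; without it the comparison of the previous paragraph delivers only the separable, rectangular bound (and correspondingly the square/lemniscate value rather than the hexagonal one). Second, and more seriously, matching the constant to \emph{exactly} $1/|f_h'(0)|$ requires the heat-flow metric to approximate the hyperbolic metric with the sharp curvature normalization, whereas the Ahlfors--Schwarz step controls curvature only up to a loss---precisely the slack responsible for Ahlfors' non-sharp lower bound $\tfrac{1}{2}$ for $\LC$. Closing this gap, namely producing from the heat kernel a comparison metric of curvature exactly $-1$ at the deep hole and simultaneously proving \eqref{eq_conjecture}, is where I expect essentially all of the difficulty to concentrate.
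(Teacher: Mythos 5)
You should note at the outset that the paper contains no proof of this statement: it is Conjecture \ref{con_Rad}, Rademacher's conjecture as reformulated in \cite{BaernsteinVinson_Local_1998}, and the paper's surrounding results are of a different character --- it proves, unconditionally, the identity $2 A_{\L_h}(1) = |f_h'(0)| = \LC_+^{-1}$ via Ramanujan's formula \eqref{eq_magic_hex} and Gauss's formula \eqref{eq_Gauss} (and the lemniscate analogue for the square case), i.e.\ a coincidence of the \emph{values} of the two conjectured extremizers, explicitly presented as evidence for a ``promising connection'' rather than as a mechanism. Your proposal, to its credit, also does not claim a proof, and your first reduction is correct: pulling back the Poincar\'e metric turns the pointwise inequality into $r(f)\,\inf_w \rho_\Gamma(w) \geq 1/|f_h'(0)|$, a scale-invariant shape functional conjecturally minimized by $\L_h$ (modulo the harmless slip that the density $(1-|z|^2)^{-1}$ has curvature $-4$, not $-1$; with curvature $-1$ it is $2(1-|z|^2)^{-1}$, and the reformulation survives because $|f_h'(0)|$ is defined by the covering map, not by a metric normalization).

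The genuine gaps in your program, beyond the two you flag, are these. First, the reduction to lattices is a missing idea, not a routine symmetrization: deleting points of $\Gamma$ enlarges the domain and so \emph{decreases} $\rho_\Gamma$, but it simultaneously can \emph{increase} $r(f)$, so the functional $\Phi(\Gamma) = r(f)\inf_w \rho_\Gamma(w)$ has no evident monotonicity, and neither the paper nor \cite{Baernstein_HeatKernel_1997}, \cite{BaernsteinVinson_Local_1998} offers any periodization argument driving the extremizer onto a lattice --- this is part of why the problem is open, and it should be listed alongside your two obstacles. Second, and more importantly, your claim that the hexagonal heat-kernel extremality \eqref{eq_conjecture} ``is exactly what would certify that $\L_h$ minimizes $\Phi$'' overstates what is known: no construction is given (in this paper or the cited literature) that manufactures from $p_\L(z;t)$ a comparison metric of curvature $\leq -1$ whose Ahlfors--Schwarz comparison is sharp at the deep hole, so there is at present no implication from \eqref{eq_conjecture}, or from \eqref{eq_con_A}, to Conjecture \ref{con_Rad} --- only the matching of conjectured extremal constants noted above. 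Relatedly, the paper's heat-kernel problem fixes the \emph{area} of the torus while the conjecture normalizes the \emph{covering radius}; the paper only remarks that this tension ``seems to be resolved by the elliptic modulus'' in the rectangular case, and your scale-invariance observation, while correct for $\Phi$, does not by itself transport the fixed-area extremality statement across this normalization. In short: your outline is a faithful and well-referenced statement of Baernstein's program, but as a proof attempt it is missing the same two bridges the paper itself leaves unbuilt, plus the lattice-reduction step you treat as routine.
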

Here, $r(f)$ is the radius of the largest disc that can be placed in $f(\D) = \C \backslash \Gamma$ \footnote{Without loss of generality, here, we may assume that $f(0)$ is the center of the largest disc that can be placed in $\Gamma$.}, so it is the size of the covering radius of $\Gamma$, and $f_h$ is the universal cover of $\D$ onto the hexagonal torus with covering radius 1. We note that $|f_h'(0)| \approx 1.84074 \ldots$ which is, of course, the reciprocal of $\LC_+$ derived by Rademacher \cite{Rad43} and given in \eqref{eq_LC}. In fact, for the square torus and the hexagonal torus, an explicit construction of the universal covering map is possible. This fact was used by Rademacher as he explicitly computed $|f_h'(0)|$ in his article \cite{Rad43} by constructing the universal covering map of the hexagonal torus.

For the rest of the work we fix $t = 1$. With this restriction, we introduce the following universal constants arising from \eqref{eq_AB}, which are closely related to Landau's ``Weltkonstante";
\begin{align}
	\mathcal{A} = \mathcal{A}^*(1)
	\qquad \textnormal{ and } \qquad
	\mathcal{B} = \mathcal{B}_*(1).
\end{align}
Now, for $t = 1$, we derive the following values for the hexagonal torus;
\begin{align}
	A_{\L_h}(1) & = \sum_{(k,l) \in \Z^2} e^{-\pi \tfrac{2}{\sqrt{3}} \left( k^2 +k l+ l^2 \right)} e^{2 \pi i \left( \tfrac{k}{3} - \tfrac{l}{3} \right)} \approx 0.920371 \ldots\\
	B_{\L_h}(1) & = \sum_{(k,l) \in \Z^2} e^{-\pi \tfrac{2}{\sqrt{3}} \left( k^2 +k l+ l^2 \right)} \approx 1.159595 \ldots \, .
\end{align}
Note that by \eqref{eq_cubic} we have
\begin{equation}\label{eq_ratio}
	\frac{A_{\L_h}(1)}{B_{\L_h}(1)} = 2^{-1/3}.
\end{equation}

By Montgomery's result we know that
\begin{equation}
	\mathcal{B} = B_{\L_h}(1).
\end{equation}
For the lower temperature, the question remains whether the hexagonal torus yields the optimal solution;
\begin{equation}\label{eq_con_A}
	\mathcal{A} \stackrel{(?)}{=} A_{\L_h}(1).
\end{equation}

Now, assuming the correctness of \eqref{eq_con_A}, i.e., that the maximizer of the lowest temperature at time $t=1$ among all tori of area 1 with flat metric is the hexagonal torus, we get that, up to a factor of 2 \footnote{Obviously, the factor 2 would disappear if in Theorem \ref{thm_Landau} we asked for the diameter of the image to be 1 instead of its radius.}, this is the reciprocal of the conjectured value of Landau's ``Weltkonstante". By using the result claimed by Ramanujan \eqref{eq_magic_hex}, Gauss' formula \eqref{eq_Gauss} and formula \eqref{eq_ratio} we get
\begin{align}
	2 \mathcal{A} & = 2 \sum_{(k,l) \in \Z^2} e^{-\pi \tfrac{2}{\sqrt{3}} \left( k^2 +k l+ l^2 \right)} e^{2 \pi i \left( \tfrac{k}{3} - \tfrac{l}{3} \right)}\\
	& = 2^{2/3} \, {}_2F_1(\tfrac{1}{3}, \tfrac{2}{3}; 1; \tfrac{1}{2}) =
	\frac{\Gamma\left( \tfrac{1}{6} \right)}{\Gamma\left( \tfrac{1}{3} \right) \Gamma\left( \tfrac{5}{6} \right)} = |f'_h(0)| = \LC_+^{-1}.
\end{align}
In \cite[Sec.~7]{Faulhuber_PhD_2016} it was conjectured that the values $2 A_{\L_h}(1)$ and $\LC_+^{-1}$ yield the same constant, which we have now proven. Hence, the expected solution to Landau's problem would be given by
\begin{equation}
	\LC = \frac{1}{2	\mathcal{A}}.
\end{equation}
Also, in \cite{Faulhuber_PhD_2016} Rademacher's techniques were used to construct the universal covering map of the square torus of covering radius 1 (see also \cite[Chap.~VI, Sec.~5]{Nehari_Conformal_1975}) and it was proven that
\begin{equation}
	\LC_\square = \frac{1}{2 G},
\end{equation}
where, as already mentioned, $G = \theta_4(e^{-\pi})^2 \approx 0.834627 \ldots$ is Gauss' constant. Hence, the value of the separable Landau constant $\LC_\square \approx 0.59907 \ldots$ is exactly the second lemniscate constant.

Furthermore, we derive the exact value for the conjectured solution to the problem posed in \cite{Baernstein_Metric_2005} and \cite{Eremenko_Hyperbolic_2011}, which basically restricts the set $\Gamma$ in Conjecture \ref{con_Rad} to be a rectangular lattice. If we denote the universal covering map of the square torus of covering radius 1 by $f_\square$, then it is possible to explicitly compute (see \cite[Sec.~7]{Faulhuber_PhD_2016} and \cite[Chap.~VI, Sec.~5]{Nehari_Conformal_1975})
\begin{equation}
	|f'_\square(0)| = \frac{1}{\LC_\square} = \frac{\Gamma\left( \tfrac{1}{4} \right)}{\Gamma\left( \tfrac{1}{2} \right) \Gamma\left( \tfrac{3}{4} \right)} = \sqrt{2} \, {}_2F_1(\tfrac{1}{2}, \tfrac{1}{2}; 1; \tfrac{1}{2}) = 2 \, \theta_4(e^{-\pi})^2 = 2 G = 2 A^*(1),
\end{equation}
where we again used Gauss' formula \eqref{eq_Gauss} and Ramanujan's formula \eqref{eq_magic}.

Lastly, we note that it is easy to show that for any torus $\T^2_\L$ and arbitrary $t \in \R_+$ the lowest temperature fulfills
\begin{equation}
	A_\L(t) \leq 1,
\end{equation}
with equality as $t \to \infty$. This is, so to say, a uniform upper bound on the lowest temperature on the standard torus $\T^2_{(\Z^2,S)}$ with varying metric induced by $S \in SL(2,\R)$ over all $t \in \R_+$. Equivalently, we have
\begin{equation}
	\frac{1}{2} \leq \frac{1}{2 A_\L(t)},
\end{equation}
which goes along quite nicely with the general lower estimate on Landau's constant established by Ahlfors \cite{Ahl38} using ultrahyperbolic metrics, mentioned in the introduction;
\begin{equation}
	\frac{1}{2} \leq \LC.
\end{equation}

To conclude, it seems that if we are looking for a connection between Landau's problem and the heat kernel, we should not be looking at the minimal temperature of tori with fixed covering radius, but at the minimal temperature of tori of fixed area. The seemingly strange part of comparing universal covering maps of tori of fixed covering radius $k^2+k'^2 = 1$ with heat kernels on tori of fixed area 1 seems to be resolved by the elliptic modulus, at least in the rectangular case.

\bibliographystyle{plain}


\end{document}